\title{Generic level $p$ Eisenstein congruences for $\text{GSp}_4$.}
\author{Dan Fretwell}
\thanks{Dan Fretwell, Heilbronn Institute for Mathematical Research, School of Mathematics, University of Bristol, U.K. Email: \url{daniel.fretwell@bristol.ac.uk}\\  $2010$ Mathematics Subject Classification: $11$F$33$, $11$F$70$, $11$F$80$.}
\date{}
\theoremstyle{plain}
\newtheorem{thm}{Theorem}[section]
\newtheorem{lem}[thm]{Lemma}
\newtheorem{cor}[thm]{Corollary}
\newtheorem{prop}[thm]{Proposition}
\newtheorem{conj}[thm]{Conjecture}
\theoremstyle{definition}
\begin{document}
\maketitle

\begin{center}
\textbf{Abstract}
\end{center}

We investigate level $p$ Eisenstein congruences for $\text{GSp}_4$, generalisations of level $1$ congruences predicted by Harder. By studying the associated Galois and automorphic representations we see conditions that guarantee the existence of a paramodular form satisfying the congruence. This provides theoretical justification for computational evidence found in the author's previous paper.

\section{Introduction}

Consider the discriminant function $\Delta(z) = q\prod_{n=1}^{\infty}(1-q^n)^{24}$ (where $q = e^{2\pi i z}$). A famous observation, due to Ramanujan, is that the Fourier coefficients $\tau(n)$ of $\Delta$ satisfy the congruence: \[\tau(n) \equiv \sigma_{11}(n) \bmod 691.\] A natural way to view this is as a congruence between the Hecke eigenvalues of the unique normalised weight 12 cusp form $\Delta$ and the weight 12 Eisenstein series $E_{12}$. The modulus $691$ appears since it divides the numerator of $\frac{\zeta(12)}{\pi^{12}}$, a quantity which appears in the constant term of $E_{12}$.

Since the work of Ramanujan there have been many generalizations of his congruences. Indeed by looking for big enough primes dividing the numerator of $\frac{\zeta(k)}{\pi^k}$, i.e. $\frac{B_k}{2k}$, one can provide similar congruences at level $1$ between cusp forms and Eisenstein series of weight $k$ \cite{datskovsky}. In fact one can also give ``local origin" congruences between higher level cusp forms and level $1$ Eisenstein series by extending the divisibility criterion to include Euler factors of $\zeta(k)$ rather than the global values of $\zeta(s)$ (see \cite{danneil} for results and examples).

One can study Eisenstein congruences for genus $2$ Siegel modular forms. There are many ways to generalise. In this paper we consider a particular conjectural congruence for paramodular forms of level $p$, an extension of a congruence predicted at level $1$ by Harder \cite{harder1}.

Given $k'\geq 0$ and $N\geq 1$ let $S_{k'}(\Gamma_0(N))$ denote the space of weight $k'$ elliptic cusp forms for $\Gamma_0(N)$ and let $S_{k'}^{\text{new}}(\Gamma_0(N))$ denote the subspace of new forms. 

For $j,k\geq 0$ let $V_{j,k}$ denote the representation $\text{Symm}^{j}(\mathbb{C}^2)\otimes\text{det}^k$ of $\text{GL}_2(\mathbb{C})$. Then $S_{j,k}(K(N))$ will denote the space of genus $2$, $V_{j,k}$-valued Siegel cusp forms for the paramodular group: \[K(N) = \left\{\left(\begin{array}{cccc}* & N* & * & *\\ * & * & * & N^{-1}*\\ * & N* & * & *\\ N* & N* & N* & *\end{array}\right)\right\} \cap \text{Sp}_4(\mathbb{Q}),\] where the stars represent integers.

For a normalised eigenform $f\in S_{k}(\Gamma_0(N))$ let $\Lambda(f,s)$ denote its completed L-function. For each critical value $1\leq m \leq k-1$ there exists a Deligne period $\Omega_m$ such that $\Lambda_{alg}(f,m) = \frac{\Lambda(f,m)}{\Omega_m}\in \overline{\mathbb{Q}}$, well defined upto multiplication by $\mathcal{O}_{\mathbb{Q}_f}^{\times}$. In fact $\Omega_m$ only depends on the parity of $m$. 

With this choice of period it makes sense to talk about divisibility of critical values of the L-function. Harder's original congruence suggests that for $N=1$, large enough primes dividing the numerators of these $L$-values should give Eisenstein congruences. In this paper we will be interested in the following level $p$ version.

\begin{conj}(Level $p$ paramodular Harder's conjecture)\label{V}

Let $j > 0$ and $k\geq 3$ and let $f\in S_{j+2k-2}^{\text{new}}(\Gamma_0(p))$ be a normalized Hecke eigenform away from $p$ with eigenvalues $a_q\in\mathcal{O}_f$. Suppose that $\text{ord}_\lambda(\Lambda_{alg}(f,j+k)) > 0$ for some prime $\lambda$ of $\mathbb{Q}_f$ lying above a rational prime $l > j+2k-2$ (with $l\neq p$). 

Then there exists a Hecke eigenform $F\in S_{j,k}^{\text{new}}(K(p))$ away from $p$ with eigenvalues $b_q\in\mathcal{O}_F$ satisfying \[b_q \equiv q^{k-2} + a_q + q^{j+k-1} \bmod \Lambda\] for all primes $q\neq p$ (where $\Lambda$ is some prime lying above $\lambda$ in the compositum $\mathbb{Q}_f\mathbb{Q}_F$).
\end{conj}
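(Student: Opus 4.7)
The plan is to construct $F$ by lifting a reducible residual $\text{GSp}_4$-valued Galois representation. Let $\bar\rho_{f,\lambda}$ denote the mod $\lambda$ representation attached to $f$; by the hypothesis $l > j+2k-2$ it is absolutely irreducible with determinant $\bar\chi_l^{j+2k-3}$, where $\bar\chi_l$ is the mod $l$ cyclotomic character. The congruence predicted by Conjecture \ref{V} is exactly what one expects if the mod $\Lambda$ Galois representation attached to $F$ is isomorphic to
\[
\bar\rho := \bar\chi_l^{k-2} \oplus \bar\rho_{f,\lambda} \oplus \bar\chi_l^{j+k-1},
\]
which carries a natural symplectic structure valued in $\text{GSp}_4(\overline{\mathbb F}_\lambda)$ with similitude character $\bar\chi_l^{j+2k-3}$, pairing the two cyclotomic summands against each other and using the self-duality of $\bar\rho_{f,\lambda}$ up to an appropriate twist. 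The goal is then to realise $\bar\rho$ as the reduction of an automorphic $\text{GSp}_4$-Galois representation corresponding to a cuspidal $\pi$ of weight $(j,k)$ and paramodular level $p$.

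First I would use the divisibility $\text{ord}_\lambda(\Lambda_{\text{alg}}(f,j+k))>0$ together with known cases of the Bloch--Kato conjecture for modular forms (Kato, Skinner--Urban) to produce a nonzero class in the relevant Selmer group, giving a nonsplit extension of $\bar\chi_l^{j+k-1}$ by $\bar\rho_{f,\lambda}$ (and, dually, of $\bar\rho_{f,\lambda}$ by $\bar\chi_l^{k-2}$). Choosing a suitable integral lattice produces a reducible-but-nonsplit symplectic $l$-adic Galois representation reducing to $\bar\rho$ with Hodge--Tate weights matching weight $(j,k)$. The bound $l > j+2k-2$ keeps us in the Fontaine--Laffaille range, ensures absolute irreducibility of $\bar\rho_{f,\lambda}$, and kills obstructive Ext classes between the various cyclotomic Tate twists. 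Since $f$ is $p$-new, the local restriction $\bar\rho_{f,\lambda}\vert_{G_{\mathbb Q_p}}$ is Steinberg or ramified principal series, so by the local paramodular theory of Roberts--Schmidt the sought local component $\pi_p$ should admit a unique (up to scalar) $K(p)$-fixed vector.

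Finally I would invoke a modularity/lifting theorem for $\text{GSp}_4$, in the spirit of Boxer--Calegari--Gee--Pilloni, Pilloni's eigenvarieties, or Urban's $p$-adic families, to realise the above deformation as the Galois representation attached to an honest cuspidal $\pi$ of $\text{GSp}_4(\mathbb A)$; the required $F \in S_{j,k}^{\text{new}}(K(p))$ is then a nonzero paramodular vector in $\pi$. The main obstacle is precisely this final step: one must rule out that $\pi$ is of CAP or endoscopic type (for instance a Yoshida or Saito--Kurokawa lift), since endoscopic transfers from $(f, E_k)$-type data would match the Hecke eigenvalues $q^{k-2} + a_q + q^{j+k-1}$ without producing the desired \emph{generic} cuspidal form. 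Showing that $\pi$ must be of general type demands careful analysis of the nonsplit extension produced in the Selmer step, together with further Bloch--Kato input, and this is where the real content of the theorem lies. An alternative, closer to Harder's original strategy, would be to build a Klingen Eisenstein cohomology class attached to $f$, bound its denominator at $\lambda$ using the $L$-value divisibility, and extract the congruence from an integral comparison of cuspidal and Eisenstein cohomology at paramodular level $p$.
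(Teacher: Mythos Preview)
The statement you are trying to prove is labelled \textbf{Conjecture} in the paper, and the paper does \emph{not} prove it. Indeed the introduction says explicitly that Harder's level $1$ congruence ``remains unproved and evidence is rare'' and that ``the same can be said about higher level generalisations.'' There is therefore no proof in the paper to compare your proposal against. The paper's actual content runs in the \emph{opposite} direction: it assumes a Siegel eigenform $F$ of weight $(j,k)$ already exists satisfying the congruence with $f$, and then argues (Theorem~1.2) that under mild hypotheses the local component $\pi_{F,p}$ must be of type II$_a$, hence that $F$ can be taken to be a paramodular newform. No construction of $F$ from $f$ is attempted.

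Your outline is a reasonable sketch of how one might \emph{attack} the conjecture, but it is not a proof, and you flag the essential gaps yourself. A few specific points. First, the claim that $\bar\rho_{f,\lambda}$ is absolutely irreducible merely because $l>j+2k-2$ is not justified; the paper obtains irreducibility only under the further Bernoulli-type hypothesis $\mathrm{ord}_\Lambda\big(B_{k'}(p^{k'}-1)/2k'\big)=0$. Second, the step ``use known cases of Bloch--Kato \ldots\ to produce a nonzero Selmer class'' is doing all of the work and is not established in the generality you need: the divisibility of an algebraic $L$-value does not in general yield the precise nonsplit symplectic extension you require. Third, the modularity lifting step for $\mathrm{GSp}_4$ that would promote your deformation to an honest cuspidal $\pi$ of general type is, as you concede, exactly the open problem; ruling out CAP or endoscopic contributions is the heart of the matter and nothing in your sketch addresses it. In short, your proposal is a plausible research programme, not a proof, and the paper makes no claim to have carried it out.
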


The $j=0$ version of the above conjecture gives congruences between newforms for $K(p)$ and Saito-Kurokawa lifts of forms for $\Gamma_0(p)$. Such congruences have been studied in detail, for example in \cite{brown}.

It should be noted that Harder's level $1$ congruence remains unproved and evidence is rare. The same can be said about higher level generalisations. In the author's previous paper computational evidence was given for Conjecture \ref{V} at levels $p=2,3,5,7$ by using algebraic modular forms \cite{fretwell}. 

Naturally one asks why there is a preference of considering paramodular forms over say $\Gamma_0(p)$. The aim of this paper is to show that, given the existence of a ``generic" level $p$ congruence of the above type, it is likely that the genus $2$ form can be taken to be paramodular. 

More specifically we consider an automorphic representation $\pi_F = \otimes \pi_{F,q}$ attached to a Siegel modular form of genus $2$, weight $(j,k)$. Assuming $\pi_{F,q}$ is unramified for all $q\neq p$ and that $F$ satisfies the congruence in Conjecture \ref{V} the following result gives the required limitations on $\pi_{F,p}$.

\begin{thm}\label{PQ}
Let $e(\Lambda), f(\Lambda)$ be the ramification index and the inertia degree of the extension $K_{\Lambda}/\mathbb{Q}_l$. Let $k' = j+2k-2$.
\begin{enumerate}
\item{If $l\geq \text{max}\{6f(\Lambda)+2, e(\Lambda)+2\}$ then $\pi_{F,p}$ is induced from the Borel subgroup of $\text{GSp}_4(\mathbb{Q}_p)$.}
\item{If further we have $p^{j+2t-2}\not\equiv 1 \bmod \Lambda$ for $t=0,1,2,3$ then $\pi_{F,p}$ is of type I or II (see the Appendix for the classification).}
\item{If further $\text{ord}_{\Lambda}\left(\frac{B_{k'}(p^{k'}-1)}{2k'}\right) = 0$ then either $\pi_{F,p}$ is of type II$_a$ or there exists $g\in S_{k'}(\text{SL}_2(\mathbb{Z}))$ satisfying the congruence.}
\end{enumerate}
\end{thm}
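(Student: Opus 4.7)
The plan is to translate the Hecke eigenvalue congruence into a statement about the mod-$\Lambda$ Galois representation attached to $F$, then restrict to the decomposition group at $p$ and read off the local structure of $\pi_{F,p}$ via local Langlands for $\mathrm{GSp}_4$ together with the Sally--Tadi\'c classification recalled in the Appendix.

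By Chebotarev density applied to the congruence $b_q \equiv q^{k-2} + a_q + q^{j+k-1} \pmod{\Lambda}$ at all primes $q \neq p$, the semisimplified mod-$\Lambda$ Galois representation of $F$ decomposes as
$$\bar\rho_F^{\mathrm{ss}} \cong \bar\chi^{k-2} \oplus \bar\rho_f \oplus \bar\chi^{j+k-1},$$
where $\bar\chi$ is the mod-$l$ cyclotomic character and $\bar\rho_f$ is Deligne's residual representation attached to $f$. Since $f$ is a newform of exact level $p$ with trivial nebentypus, $\bar\rho_f|_{G_{\mathbb{Q}_p}}$ is an unramified twist of a Steinberg-type representation, and its semisimplification is a sum of two explicit unramified characters whose Frobenius eigenvalues are determined by $k'$ and the Atkin--Lehner sign of $f$ at $p$. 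Consequently $\bar\rho_F^{\mathrm{ss}}|_{G_{\mathbb{Q}_p}}$ is a sum of four explicit unramified characters.

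For part (1), the Hodge--Tate weights of $\rho_F$ are $\{0, k-2, j+k-1, j+2k-3\}$, all bounded by $k' < l$. The hypothesis $l \geq \max\{6f(\Lambda)+2, e(\Lambda)+2\}$ places us in the regime where integral $p$-adic Hodge theory (Fontaine--Laffaille type results) applies, so that the sum-of-characters structure of the residual local representation lifts to an analogous structure on the Weil--Deligne representation attached to $\rho_F|_{G_{\mathbb{Q}_p}}$. Via local Langlands for $\mathrm{GSp}_4$ this forces $\pi_{F,p}$ to be an irreducible constituent of a representation induced from the Borel subgroup. For part (2), the Sally--Tadi\'c classification splits such Borel-induced constituents into types I--VI according to which coincidences occur among the four inducing characters; the conditions $p^{j+2t-2} \not\equiv 1 \pmod{\Lambda}$ for $t = 0, 1, 2, 3$ are designed to encode pairwise distinctness (up to the signs coming from the Steinberg piece) of the four characters extracted above, ruling out types III--VI and leaving only types I and II.

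For part (3), type II has two irreducible constituents, II$_a$ (the Saito--Kurokawa type) and II$_b$. If $\pi_{F,p}$ is not II$_a$, then $F$ is mod-$\Lambda$ congruent to an automorphic representation arising from a level $1$ object, and the condition $\mathrm{ord}_\Lambda(B_{k'}(p^{k'}-1)/(2k')) = 0$ eliminates the residual Eisenstein contribution to the level-$p$ Hecke algebra; combined with the local structure this forces the existence of a cusp form $g \in S_{k'}(\mathrm{SL}_2(\mathbb{Z}))$ satisfying the congruence. The main obstacle I expect is part (1): the passage from the residual local statement to a statement about $\pi_{F,p}$ itself requires a careful combination of integral $p$-adic Hodge theoretic lifting with the local Langlands dictionary for $\mathrm{GSp}_4$, and must correctly handle the potentially ramified behavior of $\rho_F|_{G_{\mathbb{Q}_p}}$; part (3) is also delicate because it implicitly involves a level-lowering-type argument for $f$ controlled precisely by the Bernoulli condition.
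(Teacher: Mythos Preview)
Your overall strategy---translate the congruence into $\bar\rho_F^{\mathrm{ss}} \cong \bar\chi^{k-2}\oplus\bar\rho_f\oplus\bar\chi^{j+k-1}$ via Chebotarev and Brauer--Nesbitt, then restrict to $p$ and use local Langlands---is exactly the paper's strategy. But your execution of part~(1) goes wrong, and part~(3) is muddled.

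\textbf{Part (1).} You invoke Fontaine--Laffaille theory and Hodge--Tate weights, but this is the wrong tool: Fontaine--Laffaille concerns $\rho_F|_{G_{\mathbb{Q}_l}}$, whereas here we need $\rho_F|_{G_{\mathbb{Q}_p}}$ with $p\neq l$. There is no general principle by which a mod-$\Lambda$ sum-of-characters structure ``lifts'' to the characteristic-zero Weil--Deligne representation, and the bound $l\geq 6f(\Lambda)+2$ has nothing to do with $p$-adic Hodge theory. The paper's argument is entirely different: by Grothendieck's monodromy theorem, the $\rho_0$-part of the Weil--Deligne representation at $p$ has $\rho_0(I_p)$ finite; one must show this finite group is trivial. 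The congruence forces $\bar\rho_{F,p}(I_p)$ to consist of unipotent matrices, hence to have order a power of $l$. A torsion-freeness lemma (using $l>e(\Lambda)+1$) shows $\rho_0(I_p)$ injects into $\mathrm{GL}_4(\mathbb{F}_{\Lambda'})$ for any auxiliary prime $\Lambda'$, so if $\rho_0(I_p)$ is nontrivial then $l\mid|\mathrm{GL}_4(\mathbb{F}_{\Lambda'})|$. The bound $l\geq 6f(\Lambda)+2$ is then a pigeonhole count in $(\mathbb{Z}/l\mathbb{Z})^\times$ guaranteeing an $l'$ for which this fails.

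\textbf{Part (3).} You have the roles reversed. It is $\mathrm{II}_b$, not $\mathrm{II}_a$, that is the unramified (non-generic) constituent. More importantly, the level-lowering happens on the $\mathrm{GL}_2$ side, not the $\mathrm{GSp}_4$ side: the Bernoulli condition is used precisely to force $\bar\rho_f$ to be \emph{irreducible} (otherwise one obtains a Ramanujan congruence contradicting the hypothesis), which is the input needed for Ribet's theorem. The dichotomy is then on $\bar\rho_{f,p}|_{I_p}$: if it is trivial, Ribet produces $g\in S_{k'}(\mathrm{SL}_2(\mathbb{Z}))$ with $\bar\rho_g\sim\bar\rho_f$; if it is nontrivial, then $\bar\rho_{F,p}$ is ramified at $p$, which via local Langlands rules out the unramified types I and $\mathrm{II}_b$, leaving $\mathrm{II}_a$.
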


Being of type II$_a$ guarantees the existence of a new $K(p)$ fixed vector, hence that we may find $F\in S_{j,k}^{\text{new}}(K(p))$ satisfying the congruence. 

If $\text{ord}_{\Lambda}\left(\frac{B_{k'}(p^{k'}-1)}{2k'}\right) > 0$ then it is known that $f$ will satisfy a simpler ramanujan congruence \cite{danneil}. Thus we see that the condition in part three of the above is controlling the existence of a ``simpler" congruence. The conditions in parts one and two are not very restrictive since $l$ is large in general relative to $p, e(\Lambda)$ and $f(\Lambda)$.

\section{Proving Theorem \ref{PQ}}
We wish to justify the use of paramodular forms in the statement of Conjecture \ref{V}. As discussed in the introduction we will do this by proving Theorem \ref{PQ}. In order to do this we will fix the following notation:

\begin{itemize}
\item{$\pi_{F} = \otimes_{q\leq \infty}\pi_{F,q}$ is an automorphic representation of GSp$_4$ attached to some Siegel modular form of weight (j,k). We will assume that $\pi_{F}$ is unramified away from $p$. The form $F$ is assumed to be a Hecke eigenform away from $p$ with eigenvalues $b_q\in\mathcal{O}_F$.}
\item{$f\in S_{j+2k-2}^{\text{new}}(\Gamma_0(p))$ is a normalized Hecke eigenform away from $p$, wth eigenvalues $a_q\in\mathcal{O}_f$. We write $k' = j+2k-2$. Attached to $f$ is an automorphic representation $\pi_{f} = \otimes_{q\leq \infty}\pi_{f,q}$ of GL$_2$.}
\item{$K = \mathbb{Q}_f\mathbb{Q}_F$ is the compositum of coefficient fields of $f$ and $F$.}
\item{$\Lambda$ is a prime of $K$ lying above a rational prime $l\neq p$ satisfying $l > j+2k-2 > 4$. Associated to $\Lambda$ is a completion $K_{\Lambda}$, valuation ring $\mathcal{O}_{\Lambda}$ and residue field $\mathbb{F}_{\Lambda}$.}
\item{$\rho_f$ is the $2$-dimensional $\Lambda$-adic Galois representation associated to $f$, realised over $\mathcal{O}_{\Lambda}$. The mod $\Lambda$ semisimple reduction of this is $\overline{\rho}_F$. Also for each prime $q$ we have the restriction $\rho_{f,q}$ to $\text{Gal}(\overline{\mathbb{Q}}_q/\mathbb{Q}_q)$.}
\item{$\rho_F$ is the $4$-dimensional $\Lambda$-adic Galois representation associated to $F$, also realised over $\mathcal{O}_{\Lambda}$. See \cite{weissauer} for details. Again we have a mod $\Lambda$ semisimple reduction $\overline{\rho}_F$ and restrictions $\rho_{F,q}$ to $\text{Gal}(\overline{\mathbb{Q}}_q/\mathbb{Q}_q)$.}
\end{itemize}

From now on we assume that the pair $(f,F)$ satisfy Conjecture \ref{V}, with $\Lambda$ being the modulus of the congruence. We will prove the parts of Theorem \ref{PQ} in reverse order.

We will need the following well known results about Galois representations attached to elliptic modular forms.

\begin{thm}{(Deligne)}\label{B} Let $f\in S_k(\Gamma_0(N))$ be a Hecke eigenform for all Hecke operators $q\nmid N$ with eigenvalues $a_q$. Let $l$ be a prime satisfying $2 \leq k \leq l+1$ and $a_l \not\equiv 0 \bmod l$. Then $\overline{\rho}_{f,l}$ is reducible and \[\overline{\rho}_{f,l} \sim \left(\begin{array}{cc}\overline{\chi}_l^{k-1}\overline{\lambda}_{a_l^{-1}} & \star\\ 0 & \overline{\lambda}_{a_l}\end{array}\right),\] where $\lambda_a$ is the unramified character $\text{Gal}(\overline{\mathbb{Q}}_l/\mathbb{Q}_l) \longrightarrow \overline{\mathbb{Z}}_l^{\times}$ such that $\lambda_a(\phi_l) = a$ (here $\phi_l$ is a frobenius element in $\text{Gal}(\overline{\mathbb{Q}_l}/\mathbb{Q}_l)$).
\end{thm}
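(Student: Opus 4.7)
The plan is to analyse $\rho_{f,l}|_{G_{\mathbb{Q}_l}}$ through integral $p$-adic Hodge theory, exploiting the ordinariness encoded in the hypothesis $a_l \not\equiv 0 \bmod l$. I would begin by recalling that $\rho_{f,l}|_{G_{\mathbb{Q}_l}}$ is crystalline with Hodge-Tate weights $\{0, k-1\}$ (Faltings-Saito), its crystalline Frobenius having characteristic polynomial $X^2 - a_l X + l^{k-1}$. The bound $k \leq l+1$ places these weights strictly inside the Fontaine-Laffaille range $[0, l-1]$, which is what allows a safe passage between filtered integral Dieudonn\'e data and $G_{\mathbb{Q}_l}$-stable lattices in the representation.

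Next, the ordinariness hypothesis forces $X^2 - a_l X + l^{k-1}$ to factor over $\overline{\mathbb{Z}}_l$ as $(X - \alpha)(X - \beta)$ with $\alpha \in \overline{\mathbb{Z}}_l^{\times}$; a Newton-polygon argument gives $v_l(\beta) = k-1 \geq 1$, so in particular $\alpha \equiv a_l \bmod l$. I would then invoke Wiles's construction of an ordinary filtration at $l$ for such forms: there exists a short exact sequence
\[
0 \longrightarrow L_1 \longrightarrow L \longrightarrow L_2 \longrightarrow 0
\]
of $G_{\mathbb{Q}_l}$-stable $\mathcal{O}_{\Lambda}$-lattices in which $L_2$ is unramified with Frobenius acting by $\alpha$. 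Since $\det \rho_{f,l} = \chi_l^{k-1}$ (as $f$ has weight $k$ and trivial nebentypus), the sublattice $L_1$ realises $\chi_l^{k-1}\lambda_{\alpha^{-1}}$. Reducing modulo $\Lambda$ and using $\alpha \equiv a_l \bmod l$ yields an upper-triangular mod $l$ representation with diagonal entries $\overline{\chi}_l^{k-1}\overline{\lambda}_{a_l^{-1}}$ and $\overline{\lambda}_{a_l}$, which after semisimplification is exactly the claimed shape.

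The main obstacle is establishing the ordinary filtration integrally rather than merely after inverting $l$: the abstract factorisation of Frobenius on the rational crystalline module does not automatically descend to a $G_{\mathbb{Q}_l}$-stable filtration on a chosen $\mathcal{O}_{\Lambda}$-lattice, and without such integral control the diagonal characters of the reduction could fail to take the desired form. Securing the integral filtration is precisely the content of Wiles's theorem, proved through Hida's $\Lambda$-adic theory of ordinary deformations (or, alternatively, by a direct integral analysis of the \'etale cohomology of the Kuga-Sato variety after applying the Hecke projector cutting out $f$). It is here that the condition $k \leq l+1$ plays its essential role, since without it the Fontaine-Laffaille machinery underpinning the descent would no longer be available.
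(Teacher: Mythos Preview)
The paper does not prove this theorem: it is stated without proof as a well-known result attributed to Deligne (alongside the companion results of Fontaine and Langlands--Carayol) and is used as a black box in the argument for Theorem~\ref{PQ}(3). There is therefore no paper proof to compare your proposal against.

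Your outline is essentially the modern route to the result via Wiles's ordinary-filtration theorem, and the logical skeleton is sound: factor the Hecke polynomial at $l$ using the unit root supplied by ordinariness, invoke the $G_{\mathbb{Q}_l}$-stable subline with unramified quotient on which Frobenius acts by $\alpha$, determine the remaining character from $\det\rho_{f,l}=\chi_l^{k-1}$, and reduce modulo $\Lambda$. Two small points of framing. First, Fontaine--Laffaille is not really the engine here: the usual FL range is $[0,l-2]$ (or $[0,l-1]$ with care), so at $k=l+1$ the Hodge--Tate weight $k-1=l$ already falls outside it, and in any case Wiles's construction of the ordinary filtration proceeds via Hida's $\Lambda$-adic theory rather than FL. Deligne's original argument (in his letter to Serre, later written up by Gross and by Edixhoven) is a direct geometric computation with the cohomology of modular curves and predates integral $p$-adic Hodge theory. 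Second, your closing sentence attributes the hypothesis $k\le l+1$ to making FL available; historically and technically its role is rather to keep the inertial weights in a range where the diagonal characters are pinned down as stated, not to license an FL descent. These are matters of attribution rather than genuine gaps in your argument.
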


\begin{thm}{(Fontaine)}\label{A} Suppose that $f$ and $l$ are as above but that $a_l \equiv 0 \bmod l$. Then $\overline{\rho}_{f,l}$ is irreducible.
\end{thm}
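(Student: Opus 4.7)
The plan is to reduce the global question to a local statement at $l$ and invoke Fontaine's theory of crystalline representations. Since $l\nmid N$ (implicit in the set-up), the restriction of $\rho_{f,l}$ to $\text{Gal}(\overline{\mathbb{Q}}_l/\mathbb{Q}_l)$ is crystalline with Hodge-Tate weights $\{0,k-1\}$, and the bound $2\leq k\leq l+1$ places us within the Fontaine-Laffaille range. Consequently the semisimple mod-$\Lambda$ reduction of this local representation is described by an explicit filtered $\phi$-module of rank $2$ over the residue field of $\Lambda$.

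On this module, the crystalline Frobenius has characteristic polynomial $X^2 - a_l X + l^{k-1}$, whose reduction mod $\Lambda$ is $X^2 - \overline{a}_l X$ (using $k\geq 2$). Under the hypothesis $a_l\equiv 0\bmod\Lambda$ the Frobenius is therefore nilpotent on the reduced module. I would then invoke the standard classification of weakly admissible filtered $\phi$-modules with nilpotent Frobenius and Hodge-Tate weights $\{0,k-1\}$: the only possibility (up to unramified twist) is the ``supersingular'' type, from which one reads off the explicit description \[\overline{\rho}_{f,l}\big|_{I_l} \;\cong\; \psi^{k-1}\oplus (\psi')^{k-1},\] where $I_l$ is the inertia subgroup at $l$ and $\psi,\psi'$ are the two fundamental characters of level $2$ of the tame quotient.

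To conclude, I would note that the Frobenius action swaps the two tame characters (since $\psi^l=\psi'$) and that for $1\leq k-1\leq l$ one has $\psi^{k-1}\neq(\psi')^{k-1}$ on inertia. Thus no one-dimensional subspace is stable under both inertia and Frobenius, so the local representation at $l$ is irreducible; a fortiori $\overline{\rho}_{f,l}$ itself is globally irreducible, as claimed.

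The main obstacle in writing this out carefully is the Fontaine-Laffaille computation in the nilpotent-Frobenius case: this is the step where the vanishing hypothesis $a_l\equiv 0\bmod\Lambda$ genuinely enters, forcing the module to have the supersingular shape above (and, at the boundary $k=l+1$, requiring a mild extension of the strict Fontaine-Laffaille range via the Breuil module machinery). Once this classification is in hand, the rest of the argument is essentially formal.
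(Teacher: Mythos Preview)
The paper does not actually prove this theorem: it is quoted as a known result attributed to Fontaine, with no argument given (the standard reference is Edixhoven's paper \cite{edixhoven}, already in the bibliography). So there is no ``paper's own proof'' to compare against.

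That said, your sketch is essentially the classical argument and is correct in outline. Since $l\nmid N$ the local representation at $l$ is crystalline with Hodge--Tate weights $\{0,k-1\}$; in the Fontaine--Laffaille range the hypothesis $a_l\equiv 0$ makes the crystalline Frobenius nilpotent, and the classification of such filtered modules forces $\overline{\rho}_{f}|_{I_l}\cong \psi^{k-1}\oplus \psi^{l(k-1)}$ with $\psi$ a fundamental character of niveau $2$. Frobenius swaps the two inertia eigenlines, and since $(l+1)\nmid (k-1)$ for $1\le k-1\le l$ the two characters are distinct, so no line is Galois-stable. You are also right that the boundary case $k=l+1$ lies just outside the strict Fontaine--Laffaille window and needs the Breuil-module extension.

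One notational remark: in this paper $\overline{\rho}_{f,l}$ already denotes the \emph{local} representation, i.e.\ the restriction of $\overline{\rho}_f$ to $\mathrm{Gal}(\overline{\mathbb{Q}}_l/\mathbb{Q}_l)$ (see the bullet points at the start of Section~2). So your concluding sentence about deducing ``global'' irreducibility is unnecessary here---the local statement is exactly what is being asserted, and that is what your argument establishes.
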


Naturally one asks about the structure of $\overline{\rho}_{f,p}$ for $p|N$. The following theorem can be found on p.$309$ of \cite{hida}.

\begin{thm}{(Langlands-Carayol)} \label{PUI}
Suppose $p$ is a prime such that $\text{ord}_p(N) = 1$. Then: \[\overline{\rho}_{f,p} \sim \left(\begin{array}{cc}\overline{\chi}_l \overline{\lambda}_{a} & \star\\ 0 & \overline{\lambda}_{a}\end{array}\right)\] for some fixed $a$.
\end{thm}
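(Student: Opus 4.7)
The plan is to invoke the local Langlands correspondence for $\text{GL}_2(\mathbb{Q}_p)$ together with local-global compatibility. Since $\text{ord}_p(N) = 1$ and $f$ is new at level $N$, the local component $\pi_{f,p}$ has an Iwahori-fixed vector but no $\text{GL}_2(\mathbb{Z}_p)$-fixed vector. Among the generic representations of $\text{GL}_2(\mathbb{Q}_p)$ this forces $\pi_{f,p}$ to be an unramified twist of the Steinberg representation, say $\text{St}\otimes\mu$ for an unramified quasi-character $\mu$ of $\mathbb{Q}_p^{\times}$.

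Under the local Langlands correspondence, $\text{St}\otimes\mu$ matches the Weil-Deligne representation whose Frobenius-semisimple part is $\mu|\cdot|^{1/2}\oplus \mu|\cdot|^{-1/2}$ and whose monodromy $N$ is a nonzero nilpotent sending the first line into the second. The Langlands-Carayol local-global compatibility theorem identifies this (under the usual normalisations) with the Weil-Deligne representation attached to $\rho_{f,p}$. Translating from the smooth norm character to the cyclotomic character $\chi_l$, the genuine $l$-adic representation therefore takes the shape
\[\rho_{f,p} \sim \left(\begin{array}{cc} \chi_l \lambda_a & \star \\ 0 & \lambda_a \end{array}\right),\]
where $a = \mu(\phi_p)$ and the star entry records the tame inertial unipotent coming from $N$.

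The final step is reduction modulo $\Lambda$. Since both $\chi_l$ and $\lambda_a$ take values in $\mathcal{O}_{\Lambda}^{\times}$, upper-triangularity is preserved, yielding $\overline{\chi}_l\overline{\lambda}_a$ and $\overline{\lambda}_a$ on the diagonal as required. The eigenvalue $a$ coincides (up to normalisation) with the $U_p$-eigenvalue of $f$, which is why it is the same scalar appearing on both diagonal entries.

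The principal obstacle is the Langlands-Carayol compatibility itself: proving that the Weil-Deligne representation read off geometrically from $\rho_{f,p}$ agrees with the one prescribed by local Langlands requires the full machinery of Carayol's analysis of bad reduction of Shimura curves (building on Langlands and Deligne). Granting that deep input, the remainder is a routine unwinding of the local Langlands recipe for the Steinberg representation together with an integral-lattice reduction.
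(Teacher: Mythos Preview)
Your argument is correct and is precisely the standard route to this result: identify $\pi_{f,p}$ as an unramified twist of Steinberg from the conductor-one condition, read off the Weil--Deligne parameter, invoke Carayol's local--global compatibility to transport this to $\rho_{f,p}$, and reduce modulo $\Lambda$. The paper does not give its own proof of this statement at all; it simply quotes the result from Hida's book (p.~309 of \cite{hida}), so there is no alternative argument to compare against.

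One small caveat worth flagging: your first step requires $f$ to be \emph{new} at $p$ (otherwise $\pi_{f,p}$ could be an unramified principal series rather than a twist of Steinberg, and the conclusion would fail). The theorem as stated in the paper does not make this hypothesis explicit, but in every application within the paper $f$ is taken in $S_{k'}^{\text{new}}(\Gamma_0(p))$, so the omission is harmless in context. You implicitly assumed newness when you wrote ``$f$ is new at level $N$''; just be aware that this is an additional hypothesis beyond $\text{ord}_p(N)=1$.
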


\subsection{Proving Theorem \ref{PQ}$(3)$}

Assume that $\pi_{F,p}$ is of type I or II. Our aim is to show that if $\text{ord}_{\Lambda}\left(\frac{B_{k'}(p^{k'}-1)}{2k'}\right) = 0$ then either $\pi_{F,p}$ is of type II$_a$ (so that $\pi_{F,p}$ has new $K(p)$-fixed vectors) or that there exists $g\in S_{k'}(\text{SL}_2(\mathbb{Z}))$ replacing $f$ in the congruence.

To do this we first we translate the congruence into a result about Galois representations. 

\begin{lem}
\[\overline{\rho}_F \sim \overline{\rho}_f \oplus \overline{\chi}_l^{k-2} \oplus \overline{\chi}_l^{j+k-1},\] where $\chi_l$ is the $l$-adic cyclotomic character. 
\end{lem}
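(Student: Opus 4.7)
The strategy is to move from a congruence of Hecke eigenvalues to one of traces of Frobenius, and then apply Chebotarev combined with Brauer--Nesbitt. First I would recall the defining property of the Galois representations attached to $F$ and $f$: for every prime $q\neq p,l$ the representations $\rho_F, \rho_f$ are unramified at $q$ and satisfy
\[\operatorname{tr}(\rho_F(\mathrm{Frob}_q)) = b_q, \qquad \operatorname{tr}(\rho_f(\mathrm{Frob}_q)) = a_q,\]
with $\chi_l(\mathrm{Frob}_q) = q$ (and $\chi_l$ unramified outside $l$). Reducing modulo $\Lambda$, the right-hand side of the proposed isomorphism has trace $a_q + q^{k-2} + q^{j+k-1} \bmod \Lambda$ at $\mathrm{Frob}_q$, which is precisely $b_q \bmod \Lambda$ by the assumed congruence from Conjecture \ref{V}.

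Next I would observe that both $\overline{\rho}_F$ and the candidate $\overline{\rho}_f \oplus \overline{\chi}_l^{k-2} \oplus \overline{\chi}_l^{j+k-1}$ are semisimple, four-dimensional representations of $\mathrm{Gal}(\overline{\mathbb{Q}}/\mathbb{Q})$ that are unramified outside $\{p,l\}$. Since their traces agree on $\mathrm{Frob}_q$ for every $q\notin\{p,l\}$, the Chebotarev density theorem forces agreement of the trace functions on the entire Galois group (restricted to the unramified-outside-$\{p,l\}$ quotient, which is enough since both representations factor through it). Brauer--Nesbitt then promotes equality of traces of semisimple $\ell$-adic (here mod $\Lambda$) representations to an isomorphism, yielding the claimed decomposition.

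The most delicate point, and the one I would pause over, is ensuring that the identification $\operatorname{tr}(\rho_F(\mathrm{Frob}_q)) = b_q$ uses the same normalisation of Hecke eigenvalues that appears in the statement of Conjecture \ref{V}; this is a matter of matching the Satake parametrisation of $\pi_{F,q}$ on the spin side with the conventions of \cite{weissauer}. Once that is checked, no further input is needed, since $l\neq p$ ensures $\rho_F$ is unramified at $p$ on the relevant quotient used in the Chebotarev argument, and the hypothesis $l > j+2k-2$ plays no role here (it will matter only for the finer structural statements in Theorem \ref{PQ}).
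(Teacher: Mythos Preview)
Your argument is correct and essentially identical to the paper's: match traces at Frobenius via the assumed congruence, extend by Chebotarev density, and conclude by Brauer--Nesbitt. One minor correction: the hypothesis $l > j+2k-2$ does enter here, through its consequence $l > 4$; this is what lets Brauer--Nesbitt upgrade equality of \emph{traces} (rather than full characteristic polynomials) to an isomorphism of $4$-dimensional semisimple mod-$\Lambda$ representations, and the paper invokes this explicitly.
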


\begin{proof}
By assumption we have for each $q \neq p$: \[b_q \equiv a_q + q^{k-2} + q^{j+k-1} \bmod \Lambda.\] In terms of mod $\Lambda$ representations this gives $\text{tr}(\overline{\rho}_F(\phi_q)) = \text{tr}((\overline{\rho_f}\oplus\overline{\chi}_l^{k-2}\oplus\overline{\chi}_l^{j+k-1})(\phi_q))$ for all $q\neq p,l$.

The Cebotarev density theorem gives \[\text{tr}(\overline{\rho}_F) = \text{tr}(\overline{\rho}_f\oplus\overline{\chi}_l^{k-2}\oplus\overline{\chi}_l^{j+k-1}).\] Then since $l>4$ the result follows by the Brauer-Nesbitt theorem.
\end{proof}  

It will be handy to know when $\bar{\rho}_f$ is irreducible. The Bernoulli criterion forces this.

\begin{lem}
If $\overline{\rho}_f$ is reducible then ord$_{\Lambda}\left(\frac{B_{k'}(p^{k'}-1)}{2k'}\right)>0$.
\end{lem}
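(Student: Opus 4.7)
The plan is to identify $\overline{\rho}_f^{ss}$ by combining Langlands--Carayol at $p$ (Theorem \ref{PUI}) with Fontaine--Laffaille at $l$, and then to compare with the Frobenius data at $p$. This produces a case split; the subtler branch is closed by a Ribet-style Eisenstein-ideal argument.

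First, write $\overline{\rho}_f^{ss} = \psi_1 \oplus \psi_2$ with $\psi_i$ characters of $\text{Gal}(\overline{\mathbb{Q}}/\mathbb{Q})$ valued in $\mathbb{F}_\Lambda^\times$. Since $\rho_f$ is unramified outside $\{l, p\}$, so are the $\psi_i$. But Theorem \ref{PUI} shows that $\overline{\rho}_{f,p}^{ss}$ is a sum of unramified characters (as $\overline{\chi}_l$ is unramified at $p \neq l$), so in fact each $\psi_i$ is unramified outside $l$. By class field theory the $\psi_i$ are powers of $\overline{\chi}_l$, and the assumption $l > k'$ places us in the Fontaine--Laffaille range: the Hodge--Tate weights $\{0, k'-1\}$ of $\rho_f$ lie in $[0, l-2]$ and pin down the inertial action at $l$, forcing $\overline{\rho}_f^{ss} \cong 1 \oplus \overline{\chi}_l^{k'-1}$. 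Now Theorem \ref{PUI} describes the Frobenius eigenvalues of $\overline{\rho}_{f,p}^{ss}$ as $\{a, pa\}$ with $a \equiv a_p \pmod \Lambda$ (the inertia-invariant line selects $a$); matching with the multiset $\{1, p^{k'-1}\}$ leaves two cases. In Case B ($a \equiv p^{k'-1}$, so $pa \equiv 1$) one gets $\Lambda \mid p^{k'} - 1$ immediately, hence $\Lambda \mid \frac{B_{k'}(p^{k'}-1)}{2k'}$ (since $B_{k'}/(2k')$ is $\Lambda$-integral for the generic choice $l > k'$ by von Staudt--Clausen).

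Case A ($a \equiv 1$, so $\Lambda \mid p^{k'-2} - 1$) is subtler. Here $a_p \equiv 1 \pmod \Lambda$ together with $a_q \equiv 1 + q^{k'-1}$ for all $q \neq p, l$ shows that $f$ is congruent modulo $\Lambda$ to the ordinary $p$-stabilization $E_{k'}^{\text{ord}}(z) = E_{k'}(z) - p^{k'-1} E_{k'}(pz)$ of the level-$1$ Eisenstein series. A Ribet-style Eisenstein-ideal argument then forces $\Lambda$ to divide the constant term $\frac{B_{k'}(p^{k'-1}-1)}{2k'}$ of $E_{k'}^{\text{ord}}$. Using $p^{k'-2} \equiv 1 \pmod \Lambda$ gives $p^{k'-1} \equiv p$, so $\Lambda \mid \frac{B_{k'}(p-1)}{2k'}$; since $(p-1) \mid (p^{k'}-1)$ in $\mathbb{Z}$, this upgrades to $\Lambda \mid \frac{B_{k'}(p^{k'}-1)}{2k'}$, as required.

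The main obstacle is the Ribet-style input in Case A: the classical statement that a mod $\Lambda$ congruence between a level-$p$ newform and the $p$-stabilized level-$1$ Eisenstein series forces $\Lambda$ to divide the Eisenstein constant term. This is standard but should be cited carefully, for instance via the Eisenstein ideal in the level-$p$ weight-$k'$ cuspidal Hecke algebra.
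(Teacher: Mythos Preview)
Your identification of $\overline{\rho}_f^{ss}\cong 1\oplus\overline{\chi}_l^{k'-1}$ is correct and in fact slightly slicker than the paper's. Where the paper argues via Artin conductors that the $p$-part of the two characters is trivial and then invokes Theorems~\ref{B} and~\ref{A} at $l$, you read unramifiedness at $p$ directly off the diagonal of Theorem~\ref{PUI} and use Fontaine--Laffaille to fix the inertial weights at $l$. Both routes land on the Ramanujan congruence $a_q\equiv 1+q^{k'-1}\bmod\Lambda$ for $q\nmid pl$.

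Where your argument becomes less efficient is after this point. The paper simply feeds the Ramanujan congruence into Proposition~4.2 of \cite{danneil}, which is exactly the converse statement ``newform of level $p$ congruent to $E_{k'}$ $\Rightarrow$ $\Lambda\mid \frac{B_{k'}(p^{k'}-1)}{2k'}$''. Your case split on the Frobenius eigenvalues at $p$ is therefore redundant: the ``Ribet-style Eisenstein-ideal argument'' you appeal to in Case~A \emph{is} (a form of) that proposition, and once invoked it already handles both cases at once without needing to know $a_p$ or to match multisets $\{a,pa\}=\{1,p^{k'-1}\}$. In particular there is no need for the detour through the constant term $\frac{B_{k'}(p^{k'-1}-1)}{2k'}$ of the ordinary stabilisation and the subsequent manipulation via $p^{k'-2}\equiv 1$; the standard level-$p$ Eisenstein-ideal statement produces the factor $p^{k'}-1$ directly. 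Note also that your Case~B quietly uses $\Lambda$-integrality of $B_{k'}/(2k')$, which fails precisely when $l=k'+1$ (von Staudt--Clausen), a boundary value not excluded by the running hypothesis $l>k'$; the paper's citation of \cite{danneil} absorbs this edge case.
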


\begin{proof}
Suppose $\overline{\rho}_f$ is reducible. Then after a suitable choice of basis: \[\overline{\rho}_f = \left(\begin{array}{cc}\alpha & \star\\ 0 & \beta\end{array}\right),\] where $\alpha, \beta$ are two characters Gal$(\overline{\mathbb{Q}}/\mathbb{Q}) \rightarrow \mathbb{F}_{\Lambda}^{\times}$. Notice that the image of these characters is abelian.

Now it is known that $\overline{\rho}_f$ is unramified at all primes $q\nmid pl$ and so $\alpha$ and $\beta$ must be unramified at the same primes. This forces $\alpha = \overline{\chi}_l^m\epsilon_1$ and $\beta = \overline{\chi}_l^n\epsilon_2$ where $\epsilon_1,\epsilon_2$ are unramified outside $p$. 

To see this let $\chi: \text{Gal}(\overline{\mathbb{Q}}/\mathbb{Q})\rightarrow\mathbb{F}_{\Lambda}^{\times}$ be a character unramified at all $q\nmid pl$. Note that by global class field theory $\alpha$ and $\beta$ factor through $\text{Gal}(\mathbb{Q}(\mu_{p^{\infty}},\mu_{l^{\infty}})/\mathbb{Q})$ (where $\mu_{p^{\infty}}$ denotes the set of $p$th power roots of unity, similarly for $l$). The field $\mathbb{Q}(\mu_{p^{\infty}},\mu_{l^{\infty}})$ is the maximal abelian extension of $\mathbb{Q}$ unramified outside $pl$. 

We find that $\text{Gal}(\mathbb{Q}(\mu_{p^{\infty}},\mu_{l^{\infty}})/\mathbb{Q}) \cong \text{Gal}(\mathbb{Q}(\mu_{p^{\infty}})/\mathbb{Q})\times\text{Gal}(\mathbb{Q}(\mu_{l^{\infty}})/\mathbb{Q})$ since $p$ and $l$ are coprime. Hence $\chi=\delta\epsilon$ where $\delta$ is unramified outside of $l$ and $\epsilon$ is unramified outside of $p$. 

To prove the claim that $\delta$ is a power of $\overline{\chi}_l$ note that $\text{Gal}(\mathbb{Q}(\mu_{l^{\infty}})/\mathbb{Q}) \cong \mathbb{Z}_l^{\times} \cong \left(\mathbb{Z}/(l-1)\mathbb{Z}\right) \times \mathbb{Z}_l$ (using the fact that $l>2$). By continuity of Galois representations we know that $\delta$ has to be trivial on $l^t\mathbb{Z}_l$ for some $t\geq 0$ and so $\delta$ induces a representation of $\left(\mathbb{Z}/(l-1)\mathbb{Z}\right)\times\left(\mathbb{Z}/l^t\mathbb{Z}\right)$. But since $l$ is coprime to $|\mathbb{F}_{\Lambda}^{\times}| = N(\Lambda)-1$ the image of the second component must be trivial. The characters of $\left(\mathbb{Z}/(l-1)\mathbb{Z}\right)\cong \left(\mathbb{Z}/l\mathbb{Z}\right)^{\times}$ are exactly the powers of $\overline{\chi}_l$. Thus $\chi = \overline{\chi}_l^s\epsilon$ for some integer $s$.

Continuing we now see that since det$(\overline{\rho}_f(\phi_q)) \equiv q^{k'-1} \bmod \Lambda$ for all $q\nmid pl$ it must be that $\epsilon_2 = \epsilon_1^{-1}$.

Thus: \[\overline{\rho}_f = \left(\begin{array}{cc}\overline{\chi}_l^m \epsilon & \star\\ 0 & \overline{\chi}_l^n\epsilon^{-1}\end{array}\right).\] A comparison of Artin conductors (p.$39$ of \cite{weise}) shows that $\epsilon$ is trivial. Indeed the Artin conductor of $\overline{\rho}_f$ is known to be $p$ whereas if $\epsilon$ is non-trivial then the Artin conductor would be at least $p^2 > p$.

Now recall $4 < k' < l$. Also it must be the case that $a_l \not\equiv 0 \bmod \Lambda$ (otherwise $\overline{\rho}_{f,l}$ is irreducible by Theorem \ref{A}, contradicting the reducibility of $\overline{\rho}_f$). 

Thus by Theorem \ref{B} we see that $\overline{\rho}_{f,l}$ must possess an unramified composition factor, hence one of $\overline{\chi}_l^m, \overline{\chi}_l^n$ must be unramified at $l$. Since all non-trivial powers of $\overline{\chi}_l$ are ramified at $l$ this means one of the composition factors is trivial. It is then clear that the other composition factor must be $\overline{\chi}_l^{k'-1}$.

Hence: \[\overline{\rho}_f = \left(\begin{array}{cc}1 & \star\\ 0 & \overline{\chi}_l^{k'-1}\end{array}\right)\quad\text{or}\quad \left(\begin{array}{cc}\overline{\chi}_l^{k'-1} & \star\\ 0 & 1\end{array}\right).\]

In either case comparing traces of Frobenius at $q\neq p,l$ gives the Ramanujan congruence: \[a_q \equiv 1 + q^{k'-1} \bmod \Lambda.\] By Proposition $4.2$ of \cite{danneil} it must then be that ord$_{\Lambda}\left(\frac{B_{k'}(p^{k'}-1)}{2k'}\right) > 0$.
\end{proof}

\begin{prop}
Suppose $\pi_{F,p}$ is of type I or II and that ord$_{\Lambda}\left(\frac{B_{k'}(p^{k'}-1)}{2k'}\right) = 0$. Then either $\pi_{F,p}$ is of type II$_a$ or there exists a level one normalized newform $g\in S_{k'}(\text{SL}_2(\mathbb{Z}))$ that satisfies Harder's congruence with $F$.
\end{prop}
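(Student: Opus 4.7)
The plan is to combine the global decomposition
\[\overline{\rho}_F^{ss} \cong \overline{\rho}_f \oplus \overline{\chi}_l^{k-2} \oplus \overline{\chi}_l^{j+k-1}\]
(from the first lemma of this subsection), the irreducibility of $\overline{\rho}_f$ (obtained from the second lemma under the hypothesis $\text{ord}_{\Lambda}(B_{k'}(p^{k'}-1)/2k')=0$), and a local analysis of $\rho_{F,p}$ to conclude that $\overline{\rho}_{f,p}$ is unramified; then level lowering produces the desired $g$.

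First I would analyse $\rho_{F,p}$ assuming $\pi_{F,p}$ is of type I or type IIb (the only alternatives to IIa within the ``type I or II'' hypothesis). In each case the local Langlands parameter for $\text{GSp}_4$ has zero monodromy: type I is an irreducible induction from the Borel, and type IIb is $\chi 1_{\text{GL}(2)} \rtimes \sigma$; both have a semisimple parameter consisting of four characters of $W_{\mathbb{Q}_p}$. Hence $\rho_{F,p}$ is a direct sum of characters, and its mod $\Lambda$ reduction $\overline{\rho}_{F,p}$ remains semisimple, so $\overline{\rho}_{F,p} = \overline{\rho}_{F,p}^{ss}$.

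Next I compare with the congruence. Restricting the global decomposition to $G_{\mathbb{Q}_p}$ gives
\[\overline{\rho}_{F,p}^{ss} \cong \overline{\rho}_{f,p}^{ss} \oplus \overline{\chi}_l^{k-2}|_{G_{\mathbb{Q}_p}} \oplus \overline{\chi}_l^{j+k-1}|_{G_{\mathbb{Q}_p}},\]
and by Theorem~\ref{PUI} we have $\overline{\rho}_{f,p}^{ss} \cong \overline{\lambda}_a \oplus \overline{\chi}_l \overline{\lambda}_a$, with all four characters on the right-hand side unramified at $p$ (since $l \neq p$). So $\overline{\rho}_{F,p}^{ss}$ is unramified, and combined with the previous paragraph, $\overline{\rho}_{F,p}$ itself is unramified. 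Since $\overline{\rho}_f$ is an irreducible subquotient of $\overline{\rho}_F$, there is a $G_{\mathbb{Q}}$-stable filtration of $\overline{\rho}_F$ exhibiting it as such; restricting this filtration to $G_{\mathbb{Q}_p}$ shows that $I_p$ acts trivially on $\overline{\rho}_{f,p}$, so $\overline{\rho}_{f,p}$ is unramified.

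Finally I would invoke the higher-weight generalisation of Ribet's level-lowering theorem: $\overline{\rho}_f$ is globally irreducible, modular of weight $k'$ and level $p$, unramified at $p$, with $l > k' > 4$ and $l \neq p$. This produces a normalized newform $g \in S_{k'}(\text{SL}_2(\mathbb{Z}))$ with $\overline{\rho}_g \cong \overline{\rho}_f$. Comparing traces of Frobenius gives $a_q(g) \equiv a_q(f) \pmod{\Lambda}$ for all $q \neq l$, so $g$ satisfies Harder's congruence with $F$.

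The main obstacle is the local Langlands input identifying type IIb with an $N = 0$ semisimple Weil--Deligne parameter (and in particular distinguishing it from type IIa, whose parameter has non-trivial monodromy); this is standard via the Roberts--Schmidt classification but requires some bookkeeping with the parameter tables. A secondary point is citing the appropriate higher-weight level-lowering result, but the bound $l > k'$ in our standing hypotheses ensures the classical hypotheses are met.
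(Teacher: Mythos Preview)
Your proposal is correct and follows essentially the same line as the paper's proof: both use the irreducibility of $\overline{\rho}_f$ (from the Bernoulli hypothesis), the unramifiedness of $\rho_{F,p}$ when $\pi_{F,p}$ is of type I or II$_b$, and Ribet's level lowering to produce $g$. The paper organises the dichotomy by first asking whether $\overline{\rho}_{f,p}|_{I_p}$ is trivial (if so, level-lower directly; if not, rule out types I and II$_b$ because those give unramified $\rho_{F,p}$), while you run the contrapositive and deduce unramifiedness via the $N=0$ semisimple parameter rather than by quoting that I and II$_b$ are spherical --- but the content is the same.
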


\begin{proof}
We know that $\overline{\rho}_f$ is irreducible by the previous result. However by Theorem \ref{PUI} we have, under a suitable choice of basis: \[\overline{\rho}_{f,p} = \left(\begin{array}{cc}\overline{\lambda}_a & \star\\ 0 & \overline{\chi}_l \overline{\lambda}_a\end{array}\right)\quad\text{or}\quad \left(\begin{array}{cc}\overline{\chi}_l\overline{\lambda}_a & \star\\ 0 & \overline{\lambda}_a\end{array}\right).\]

In either case the restriction of $\overline{\rho}_{f,p}$ to the inertia subgroup $I_p$ of Gal$(\overline{\mathbb{Q}}_p/\mathbb{Q}_p)$ is as follows: \[\overline{\rho}_{f,p}\big|_{I_p} = \left(\begin{array}{cc}1 & \star'\\ 0 & 1\end{array}\right).\]

We have two cases. First it could be the case that $\star' \equiv 0 \bmod \Lambda$. If this is the case then we may use Ribet's level lowering theorem for modular representations (Theorem $1.1$ in \cite{ribet}) to produce $g\in S_{k'}(\text{SL}_2(\mathbb{Z}))$ such that $\overline{\rho}_{g} \sim \overline{\rho}_f$. We would then observe a level one version of Harder's congruence as required.

Now suppose that $\star' \not\equiv 0 \bmod \Lambda$, so that $\overline{\rho}_{f,p}$ is ramified. If $\pi_{F,p}$ is of type I or II$_b$ then $\pi_{F,p}$ is unramified. By the Local Langlands Correspondence for GSp$_4$ (proved in \cite{gan2}) we see that $\rho_{F,p}$ is unramified so that $\overline{\rho}_{F,p}$ is unramified, giving a contradiction. The only other possibility for $\pi_{F,p}$ is to be of type II$_a$ as required.
\end{proof}

To summarise our progress, given that $\pi_{F,p}$ is of type I or II then either: 
\begin{itemize}
\item{$f$ itself satisfies a simpler Ramanujan congruence, detected by a simple divisibility criterion,} 
\item{a replacement level $1$ elliptic form satisfies Harder's congruence with $F$. The level at which $F$ appears could be $1$ or $p$ since we did not place any ramification restrictions on $\pi_{F,p}$ in our assumptions,} 
\item{or $\pi_{F,p}$ is of type II$_a$, implying that a new paramodular form of level $p$ exists satisfying the congruence.} 
\end{itemize}

The first possibility is a rare occurrence and is easy to check for in practice. We will see later that the second possibility rarely occurs for $F\in S_{j,k}^{\text{new}}(K(p))$. The case where $F\in S_{j,k}(\text{Sp}_4(\mathbb{Z}))$ is of course the original Harder conjecture at level $1$.

From this discussion one should believe that the third possibility is most likely to occur if $F$ is not a lift from level $1$.

\subsection{Proving Theorem \ref{PQ}$(2)$}

Let us now assume that $\pi_{F,p}$ is induced from the Borel subgroup of GSp$_4(\mathbb{Q}_p)$. Then $\pi_{F,p}$ must be of type I-VI. We will show that if $p^{j+2k-2} \not\equiv 1 \bmod \Lambda$ for $t=0,1,2,3$ then $\pi_{F,p}$ is of type I or II.

Let $W_{\mathbb{Q}_p}' = \mathbb{C}\rtimes W_{\mathbb{Q}_p}$ be the Weil-Deligne group of $\mathbb{Q}_p$. The multiplication on this group is given by $(z,w)(z',w') = (z + \nu(w)z', ww')$, where $\nu: W_{\mathbb{Q}_p} \longrightarrow \mathbb{C}^{\times}$ is the character corresponding to $\mid\cdot\mid_p$ by local class field theory.

By the Local Langlands Correspondence for GSp$_4$ we may associate to each irreducible admissible representation $\pi$ of GSp$_4(\mathbb{Q}_p)$ its $L$-parameter, a certain representation: \[\rho_{\pi} : W_{\mathbb{Q}_p}' \longrightarrow \text{GSp}_4(\mathbb{C}).\] One can view such a representation as a pair $(\rho_0, N)$ where: \[\rho_0: W_{\mathbb{Q}_p} \longrightarrow \text{GSp}_4(\mathbb{C})\] is a continuous homomorphism and $N\in M_n(\mathbb{C})$ is a nilpotent matrix such that: \[\rho_0(w)N\rho_0(w)^{-1} = \nu(w)N,\] for all $w\in W_{\mathbb{Q}_p}$. Given $\rho_0$ and $N$ we recover the $L$-parameter via $\rho_{\pi}(z,w) = \rho_0(w)\text{exp}(zN)$.

Let $\pi_1,\pi_2$ be irreducible admissible representations of $\text{GSp}_4(\mathbb{Q}_p)$. Then $\pi_1 \cong \pi_2$ implies $\rho_{\pi_1} \cong \rho_{\pi_2}$ under the Local Langlands Correspondence. However the converse does not hold. A fixed $L$-parameter can arise from different isomorphism classes, but only finitely many (those in the same ``$L$-packet").

Roberts and Schmidt discuss the $L$-parameters of non-supercuspidal representations of GSp$_4(\mathbb{Q}_p)$ in \cite{schmidt2}. If $\pi$ is parabolically induced from the Borel subgroup of GSp$_4(\mathbb{Q}_p)$ then it is non-supercuspidal and $\rho_{\pi}$ is simple to describe. In particular the $\rho_0$ part is semisimple given by four characters $\mu_1,\mu_2,\mu_3,\mu_4$ of $W_{\mathbb{Q}_p}$ (which by local class field theory correspond to four characters of $\mathbb{Q}_p^{\times}$). See the Appendix for a complete table of $L$-parameters for Borel induced representations.

The four complex numbers $[\mu_1(p),\mu_2(p),\mu_3(p),\mu_4(p)]$ are the Satake parameters of $\pi$. Unramified representations are uniquely determined by their Satake parameters up to scaling (much in the same way as unramified local Galois representations are determined by the image of Frobenius).

Let us now return to our congruence between $f$ and $F$. We have already seen that the existence of this congruence for all $q\neq p$ leads to a residual equivalence of global Galois representations: \[\overline{\rho}_F \sim \overline{\rho}_f \oplus \overline{\chi}_l^{k-2} \oplus \overline{\chi}_l^{j+k-1}.\]

In particular we can compare these representations locally at $p$, the level of $f$. Since we have the local equality $\chi_{l}|_{W_{\mathbb{Q}_p}} = \nu^{-1}$ it follows that: \[\overline{\rho}_{F,p}|_{W_{\mathbb{Q}_p}} \sim \overline{\rho}_{f,p}|_{W_{\mathbb{Q}_p}} \oplus \overline{\nu}^{2-k} \oplus \overline{\nu}^{1-j-k}.\]

Given the existence of the congruence we see that the local representations $\rho_{F,p}|_{W_{\mathbb{Q}_p}}$ and $\rho_{f,p}|_{W_{\mathbb{Q}_p}}\oplus \nu^{2-k} \oplus \nu^{1-j-k}$ of $W_{\mathbb{Q}_p}$ have the same composition factors mod $\Lambda$. 

Recall that to $F$ we have attached a ``global" Galois representation $\rho_F$ and a ``global" automorphic representation $\pi_F$. Similarly for $f$. By local-global compatibility results (see \cite{sorensen2} for GSp$_4$ and \cite{taylor2} for GL$_2$) we know that $\rho_{F,p}|_{W_{\mathbb{Q}_p}}$ corresponds to $\pi_{F,p}$ and $\rho_{f,p}|_{W_{\mathbb{Q}_p}}$ corresponds to $\pi_{f,p}$ under the corresponding local Langlands correspondences. 

Tying all of this together, the existence of the congruence forces the $L$-parameter of $\pi_{F,p}$ to be congruent modulo $\Lambda$ to that of $\pi_{f,p}\oplus|\cdot|_p^{2-k}\oplus|\cdot|_p^{1-j-k}$ (up to scaling by $p^{\frac{k'-1}{2}} = p^{\frac{j+2k-3}{2}}$ in the first component). In particular the Satake parameters should match mod $\Lambda$.

Since $f$ is a newform of level $p$ it is known that $\pi_{f,p} \cong \text{St}$ or $\pi_{f,p} \cong \epsilon\text{St}$ where St is the Steinberg representation of GL$_2(\mathbb{Q}_p)$ and $\epsilon$ is the unique unramified non-trivial quadratic character of $\mathbb{Q}_p^{\times}$. In either case the Satake parameters are known to be $\alpha_p$ and $\alpha_p^{-1}$ where $\alpha_p = p^{\frac{1}{2}}$ or $\epsilon(p)p^{\frac{1}{2}} = -p^{\frac{1}{2}}$. Applying the scaling gives $[\alpha_p,\alpha_p^{-1}] = [p^{\frac{j+2k-2}{2}}, p^{\frac{j+2k-4}{2}}]$ or $[-p^{\frac{j+2k-2}{2}},-p^{-\frac{j+2k-4}{2}}]$.

It is now clear that the Satake parameters of $\pi_{f,p}\oplus|\cdot|_p^{2-k}\oplus|\cdot|_p^{1-j-k}$ are  \[[a,b,c,d] = \left[\pm p^{\frac{j+2k-2}{2}}, \pm p^{\frac{j+2k-4}{2}}, p^{k-2}, p^{j+k-1}\right]\] (where the sign is the same for $a$ and $b$). Note that these are all integral powers of $p$.

\begin{thm}\label{ABCD}
Suppose $\pi_{F,p}$ is of type I-VI and $p^{j+2t-2} \not\equiv 1 \bmod \Lambda$ for $t = 0,1,2,3$. Then $\pi_{F,p}$ cannot be of type III,IV,V or VI.
\end{thm}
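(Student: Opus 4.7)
The plan is as follows. The congruence of Satake parameters established just before the statement of Theorem~\ref{ABCD} already forces, modulo $\Lambda$, the multiset of Satake parameters of $\pi_{F,p}$ to equal
\[
S=\{a,b,c,d\}=\bigl\{\pm p^{(j+2k-2)/2},\ \pm p^{(j+2k-4)/2},\ p^{k-2},\ p^{j+k-1}\bigr\}
\]
(both signs equal). It remains to show that, under the hypothesis, this multiset cannot be the Satake multiset of any representation of type III, IV, V or VI.

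From the Appendix I would read off that, in each of these four types, the four unramified characters $\mu_1,\mu_2,\mu_3,\mu_4$ inducing $\pi_{F,p}$ are constrained by Steinberg-style relations forcing at least one \emph{additional} pair of Satake parameters (beyond the pair $(a,b)$, which already satisfies $a/b\equiv p$) to have ratio $p^{\pm 1}$ modulo $\Lambda$: type III demands two disjoint $\nu$-pairs, type IV demands that the whole multiset be a geometric progression of ratio $p^{\pm 1}$, and types V, VI impose the analogous relations twisted by an unramified quadratic character.

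The core calculation is therefore to compute the remaining pairwise ratios inside $S$ and check that none of them can be $\equiv p^{\pm 1}\pmod{\Lambda}$. Direct calculation gives
\[
c/d\equiv p^{-(j+1)},\ \ a/c\equiv\pm p^{(j+2)/2},\ \ a/d\equiv\pm p^{-j/2},\ \ b/c\equiv\pm p^{j/2},\ \ b/d\equiv\pm p^{-(j+2)/2}.
\]
Setting any one of these congruent to $p^{\pm 1}$ and, where necessary, squaring to kill the sign and the half-integer exponent produces exactly one of
\[
p^{j-2}\equiv 1,\ \ p^{j}\equiv 1,\ \ p^{j+2}\equiv 1,\ \ p^{j+4}\equiv 1\pmod{\Lambda},
\]
each excluded by the hypothesis $p^{j+2t-2}\not\equiv 1\pmod{\Lambda}$ for $t=0,1,2,3$. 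Type IV, which demands three such additional pair relations, is ruled out even more sharply.

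The main obstacle is bookkeeping: for each sub-type IIIa, IIIb, IVa--d, Va--d, VIa--d in the Roberts--Schmidt table one must read off the precise shape of the Satake multiset, then check the matching against $S$ up to its $S_4$-symmetry and the shared sign on $a,b$. The quadratic twist appearing in types V and VI is harmless since it contributes only a sign, which is killed by the squaring step above, and no input beyond the ratio calculation is needed.
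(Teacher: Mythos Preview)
Your argument is correct and reaches the same conclusion as the paper, but the organisation is genuinely different. The paper treats types VI, V, IV, III one at a time, exploiting a distinct structural feature in each case: for type~VI the Satake multiset has two \emph{equal} pairs, forcing $a$ to coincide with one of $b,c,d$; for type~V it has two pairs of the form $(\alpha,-\alpha)$, forcing $a\equiv -b,-c$ or $-d$; for type~IV the paper tests all four possible values of $c$ against an explicit geometric progression; and type~III is reduced to the type~IV and~VI cases by solving for the free parameter $\beta=\sigma(p)$. Your approach instead isolates a \emph{single} structural property shared by all four types: in each case the four characters in $\rho_0$ split into two $\nu$-related pairs, so the Satake multiset always admits a partition into two pairs of ratio $p^{\pm 1}$. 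This collapses the whole case analysis into checking the five non-$(a,b)$ pairwise ratios in $S$, and the four forbidden exponents $j-2,\,j,\,j+2,\,j+4$ drop out uniformly. The payoff is a cleaner, more conceptual argument; the paper's version, by contrast, makes the connection to the Roberts--Schmidt table more explicit and requires no preliminary structural claim.

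One small imprecision worth correcting: your remark that types~V and~VI impose relations ``twisted by an unramified quadratic character'' is not quite what is needed. In type~V the characters are $\nu^{1/2}\sigma,\ \nu^{1/2}\xi\sigma,\ \nu^{-1/2}\xi\sigma,\ \nu^{-1/2}\sigma$, and pairing $\{\nu^{1/2}\sigma,\nu^{-1/2}\sigma\}$ with $\{\nu^{1/2}\xi\sigma,\nu^{-1/2}\xi\sigma\}$ already gives two honest $\nu$-pairs with ratio exactly $p$, no sign involved; the same holds in type~VI. Thus the squaring step is required only to absorb the $\pm$ sign already present in $a,b$ (coming from $\pi_{f,p}$), not any additional twist from $\pi_{F,p}$. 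This does not affect the validity of your computation, since squaring handles either situation, but it makes the justification of your key structural claim cleaner.
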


\begin{proof}
Suppose $\pi_{F,p}$ is of one of the types III,IV,V,VI. We show that if the corresponding Satake parameters are congruent mod $\Lambda$ then $p^{j+2t-2} \equiv 1 \bmod \Lambda$ for some $t=0,1,2,3$. Then the result follows.

We work in reverse order. Here $\epsilon_0$ will stand for the trivial character. Whenever there is a choice of sign this will be fixed by a choice of upper or lower row.

\textbf{Type VI}
$\rho_0$ is given by the four characters \[\nu^{\frac{1}{2}}\sigma, \nu^{\frac{1}{2}}\sigma, \nu^{-\frac{1}{2}}\sigma, \nu^{-\frac{1}{2}}\sigma.\] Since the central character of $\pi_{F,p}$ is trivial we have $\sigma^2 = \epsilon_0$, so that $\sigma$ is trivial or quadratic.

Thus in some order the Satake parameters are given by \[\pm p^{\frac{1}{2}}, \pm p^{\frac{1}{2}}, \pm p^{-\frac{1}{2}}, \pm p^{-\frac{1}{2}}.\] Scaling by $p^{\frac{k'-1}{2}}$ gives \[\pm p^{\frac{j+2k-2}{2}}, \pm p^{\frac{j+2k-2}{2}}, \pm p^{\frac{j+2k-4}{2}}, \pm p^{\frac{j+2k-4}{2}}.\] Notice that there are two equal pairs here. Thus for $[a,b,c,d]$ to be congruent to these four numbers mod $\Lambda$ we would have to have that $a$ is equivalent to one of $b,c$ or $d$ mod $\Lambda$.

Setting $a\equiv b \bmod \Lambda$ gives $p \equiv 1 \bmod \Lambda$.

Setting $a \equiv c \bmod \Lambda$ gives $p^{\frac{j+2}{2}} \equiv \pm 1 \bmod \Lambda$.

Setting $a \equiv d \bmod \Lambda$ gives $p^{\frac{j}{2}} \equiv \pm 1 \bmod \Lambda$.

\textbf{Type V}
$\rho_0$ is given by the four characters \[\nu^{\frac{1}{2}}\sigma, \nu^{\frac{1}{2}}\xi\sigma, \nu^{-\frac{1}{2}}\xi\sigma, \nu^{-\frac{1}{2}}\sigma.\] Since the central character of $\pi_{F,p}$ is trivial we have $\sigma^2 = \epsilon_0$, so that $\sigma$ is trivial or quadratic.

Thus in some order the Satake parameters are given by \[\pm p^{\frac{1}{2}}, \mp p^{\frac{1}{2}}, \mp p^{-\frac{1}{2}}, \pm p^{-\frac{1}{2}}.\] Scaling by $p^{\frac{k'-1}{2}}$ gives \[\pm p^{\frac{j+2k-2}{2}}, \mp p^{\frac{j+2k-2}{2}}, \mp p^{\frac{j+2k-4}{2}}, \pm p^{\frac{j+2k-4}{2}}.\] Notice that there are two pairs of the form $(\alpha, -\alpha)$. Thus for $[a,b,c,d]$ to be congruent to these four numbers mod $\Lambda$ we would have to have that $a$ is equivalent to one of $-b,-c$ or $-d$ mod $\Lambda$.

Setting $a\equiv -b \bmod \Lambda$ gives $p \equiv 1 \bmod \Lambda$.

Setting $a \equiv -c \bmod \Lambda$ gives $p^{\frac{j+2}{2}} \equiv \mp 1 \bmod \Lambda$.

Setting $a \equiv -d \bmod \Lambda$ gives $p^{\frac{j}{2}} \equiv \mp 1 \bmod \Lambda$.

\textbf{Type IV}
$\rho_0$ is given by the four characters \[\nu^{\frac{3}{2}}\sigma, \nu^{\frac{1}{2}}\sigma, \nu^{-\frac{1}{2}}\sigma, \nu^{-\frac{3}{2}}\sigma.\] Since the central character of $\pi_{F,p}$ is trivial we have $\sigma^2 = \epsilon_0$, so that $\sigma$ is trivial or quadratic.

Thus in some order the Satake parameters are given by \[\pm p^{\frac{3}{2}}, \pm p^{\frac{1}{2}}, \pm p^{-\frac{1}{2}}, \pm p^{-\frac{3}{2}}.\] Scaling by $p^{\frac{k'-1}{2}}$ gives \[\pm p^{\frac{j+2k}{2}}, \pm p^{\frac{j+2k-2}{2}}, \pm p^{\frac{j+2k-4}{2}}, \pm p^{\frac{j+2k-6}{2}}.\] If $[a,b,c,d]$ are congruent to these numbers mod $\Lambda$ then there are four possibilities for $c$.

Setting $c \equiv \pm p^{\frac{j+2k}{2}} \bmod \Lambda$ gives $p^{\frac{j+4}{2}} \equiv \pm 1 \bmod \Lambda$.

Setting $c \equiv \pm p^{\frac{j+2k-2}{2}} \bmod \Lambda$ gives $p^{\frac{j+2}{2}} \equiv \pm 1 \bmod \Lambda$.

Setting $c \equiv \pm p^{\frac{j+2k-4}{2}} \bmod \Lambda$ gives $p^{\frac{j}{2}} \equiv \pm 1 \bmod \Lambda$.

Setting $c \equiv \pm p^{\frac{j+2k-6}{2}} \bmod \Lambda$ gives $p^{\frac{j-2}{2}} \equiv \pm 1 \bmod \Lambda$.

\textbf{Type III}
$\rho_0$ is given by the four characters \[\nu^{\frac{1}{2}}\chi\sigma, \nu^{-\frac{1}{2}}\chi\sigma, \nu^{\frac{1}{2}}\sigma, \nu^{-\frac{1}{2}}\sigma.\] Since the central character of $\pi_{F,p}$ is trivial we have $\chi\sigma^2 = \epsilon_0$, so that $\chi\sigma = \sigma^{-1}$.

Thus in some order the Satake parameters are given by \[p^{\frac{1}{2}}\beta^{-1}, p^{-\frac{1}{2}}\beta^{-1}, p^{\frac{1}{2}}\beta, p^{-\frac{1}{2}}\beta,\] where $\beta = \sigma(p)$. Scaling by $p^{\frac{k'-1}{2}}$ gives \[p^{\frac{j+2k-2}{2}}\beta^{-1}, p^{\frac{j+2k-4}{2}}\beta^{-1}, p^{\frac{j+2k-2}{2}}\beta, p^{\frac{j+2k-4}{2}}\beta.\] If $[a,b,c,d]$ are congruent to these numbers mod $\Lambda$ then there are four possibilities for $a$ (each giving the value of $\beta \bmod \Lambda$). However replacing $\beta$ by $\beta^{-1}$ gives the same Satake parameters, so it suffices to set $a$ congruent to just the last two Satake parameters.

Setting $a \equiv p^{\frac{j+2k-2}{2}}\beta \bmod \Lambda$ gives $\beta \equiv \pm 1 \bmod \Lambda$. This gives Satake parameters equivalent to \[\pm p^{\frac{j+2k-2}{2}}, \pm p^{\frac{j+2k-2}{2}}, \pm p^{\frac{j+2k-4}{2}}, \pm p^{\frac{j+2k-4}{2}}.\] However we have already dealt with these in Type VI.

Setting $a \equiv p^{\frac{j+2k-4}{2}}\beta \mod \Lambda$ gives $\beta \equiv \pm p \bmod \Lambda$. This gives Satake parameters equivalent to \[\pm p^{\frac{j+2k}{2}}, \pm p^{\frac{j+2k-2}{2}}, \pm p^{\frac{j+2k-4}{2}}, \pm p^{\frac{j+2k-6}{2}}.\] However we have already dealt with these in Type IV.

Suppose now that none of the following holds: \begin{align*}p^{j-2} &\equiv 1 \bmod \Lambda\\ p^j &\equiv 1 \bmod \Lambda\\ p^{j+2} &\equiv 1 \bmod \Lambda\\ p^{j+4} &\equiv 1 \bmod \Lambda.\end{align*} Then none of the conditions found above hold and so we must have that $\pi_{F,p}$ is of type I or II, as required.
\end{proof}

Note that if one compares the Satake parameters $[a,b,c,d]$ to those from a representation of type I or II then no conditions arise. It is always possible for these to be congruent mod $\Lambda$.

\subsection{Proving Theorem \ref{PQ}$(1)$}

We now move on to our final task, finding conditions that guarantee $\pi_{F,p}$ is induced from the Borel subgroup of GSp$_4(\mathbb{Q}_p)$. We will show that if $l\geq \text{max}\{6f(\Lambda)+2,e(\Lambda)+2\}$ then $\pi_{F,p}$ is induced from the Borel subgroup of $\text{GSp}_4$.

In this section $\Lambda'$ will be an arbitrary prime of $K = \mathbb{Q}_F\mathbb{Q}_f$, lying above a rational prime $l'$. 

Recall that $\pi_{F,p}$ corresponds via Local Langlands to a representation of the Weil-Deligne group $W_p'$, which itself is parametrized by a continuous representation $\rho_0: W_{\mathbb{Q}_p} \rightarrow \text{GSp}_4(\mathbb{C})$ and a nilpotent matrix $N\in\text{M}_4(\mathbb{C})$ with certain properties (mentioned in the previous subsection). However if we fix a choice of embeddings $\overline{\mathbb{Q}} \hookrightarrow\mathbb{C}$ and $\overline{\mathbb{Q}}\hookrightarrow\overline{\mathbb{Q}}_{l'}$ then one can convert these representations into $l'$-adic representations with open kernel (p.$77$ of \cite{taylor}).

It is also known that local Galois representations give rise to Weil-Deligne representations.

\begin{thm}{(Grothendieck-Deligne)}\label{PQRP}
Let $p\neq l'$ and fix a continuous $n$-dimensional $\Lambda'$-adic representation: \[\rho: \text{Gal}(\overline{\mathbb{Q}}_p/\mathbb{Q}_p) \longrightarrow \text{GL}_n(K_{\Lambda'}).\]

Then associated to $\rho$ is a unique $l'$-adic representation of $W_{\mathbb{Q}_p}'$, given by a pair $(\rho_0', N')$ satisfying:

\begin{itemize}
\item{$\rho_0': W_{\mathbb{Q}_p} \longrightarrow \text{GL}_n(K_{\Lambda'})$ is continuous with respect to the discrete topology on $\text{GL}_n(K_{\Lambda'})$. In particular $\rho_0'(I_p)$ is finite.}
\item{$\rho_0'(\phi_p)$ has characteristic polynomial defined over $\mathcal{O}_{\Lambda'}$ with constant term a $\Lambda'$-adic unit.}
\item{$N'\in\text{M}_n(K_{\Lambda'})$ is nilpotent and satisfies \[\rho_0'(\sigma)N'\rho_0'(\sigma)^{-1} = \nu(\sigma)N',\] for all $\sigma\in W_{\mathbb{Q}_p}$.}
\end{itemize}
Fixing a tamely ramified character $t_{l'}: I_p \rightarrow \mathbb{Z}_{l'}$, the relationship between $\rho$ and $\rho_0'$ is: \[\rho(\phi_p^n u) = \rho_0'(\phi_p^n u)\text{exp}(t_{l'}(u)N'),\] for all $n\in\mathbb{Z}$, $u\in I_p$.
\end{thm}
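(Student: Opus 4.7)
The plan follows the classical Grothendieck-Deligne construction: show that $\rho|_{I_p}$ is quasi-unipotent, extract $N'$ as a normalized logarithm of the unipotent action, and then twist $\rho$ by $\exp(-t_{l'}(\cdot)N')$ to produce $\rho_0'$ with finite image on inertia.

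The main step is Grothendieck's $l'$-adic monodromy theorem. After conjugation we may assume $\rho$ takes values in $\text{GL}_n(\mathcal{O}_{\Lambda'})$. The wild inertia $P_p$ is pro-$p$, so its image in the $l'$-adic analytic group $\text{GL}_n(\mathcal{O}_{\Lambda'})$ is necessarily finite; after replacing $\mathbb{Q}_p$ by a finite tamely ramified extension we may assume $\rho(P_p)=1$. Choose a topological generator $\sigma$ of the pro-$l'$ quotient of tame inertia and a Frobenius lift $\phi_p$. The tame Weil relation $\phi_p\sigma\phi_p^{-1}=\sigma^p$ forces the (finite) eigenvalue multiset of $\rho(\sigma)$ to be stable under $x\mapsto x^p$, and since the eigenvalues are $l'$-adic units they are all roots of unity. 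Shrinking further, $\rho(\sigma)$ becomes unipotent.

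With $\rho(\sigma)$ unipotent the logarithm series terminates, so I set $N':=t_{l'}(\sigma)^{-1}\log\rho(\sigma)$; independence from the chosen $\sigma$ within its pro-$l'$ tame line follows from the compatibility $\log\rho(\sigma^m)=m\log\rho(\sigma)$ with $t_{l'}(\sigma^m)=mt_{l'}(\sigma)$. Define
\[\rho_0'(\phi_p^m u):=\rho(\phi_p^m u)\exp\!\bigl(-t_{l'}(u)N'\bigr),\qquad m\in\mathbb{Z},\ u\in I_p,\]
so that the displayed factorization $\rho(\phi_p^n u)=\rho_0'(\phi_p^n u)\exp(t_{l'}(u)N')$ holds by construction.

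The three listed properties are then routine verifications. Continuity of $\rho_0'$ for the discrete topology holds because $\rho_0'$ is trivial on the open subgroup of $I_p$ where $\rho(u)=\exp(t_{l'}(u)N')$, so $\rho_0'(I_p)$ is finite. The characteristic polynomial of $\rho_0'(\phi_p)=\rho(\phi_p)$ lies in $\mathcal{O}_{\Lambda'}[x]$ with a $\Lambda'$-unit constant term since $\rho(\phi_p)\in\text{GL}_n(\mathcal{O}_{\Lambda'})$. The conjugation identity $\rho_0'(\sigma)N'\rho_0'(\sigma)^{-1}=\nu(\sigma)N'$ reduces via the Weil decomposition to the Frobenius case and follows from $\phi_p\sigma\phi_p^{-1}=\sigma^p$ in the tame quotient together with the standard normalization of $\nu$ on $\phi_p$; uniqueness of the pair $(\rho_0',N')$ is then automatic because $N'$ is forced by the unipotent part of inertia and $\rho_0'$ is determined by the displayed formula. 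The only real obstacle is the quasi-unipotence step; everything downstream is bookkeeping with the convergent logarithm and exponential.
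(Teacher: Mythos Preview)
The paper does not prove this theorem: it is stated as the classical Grothendieck--Deligne result and used as a black box (the corollary immediately following it is referred to \cite{serre} for a proof). So there is no ``paper's own proof'' to compare your proposal against.

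That said, your sketch is a faithful outline of the standard argument. A couple of small points worth tightening if you want this to stand on its own. First, the assertion that the image of wild inertia $P_p$ in $\text{GL}_n(\mathcal{O}_{\Lambda'})$ is finite is true but not quite for the reason you gesture at: the relevant fact is that the kernel of reduction $\text{GL}_n(\mathcal{O}_{\Lambda'})\to\text{GL}_n(\mathbb{F}_{\Lambda'})$ is a pro-$l'$ group, so a pro-$p$ group with $p\neq l'$ meets it trivially and hence has finite image. Second, when you ``replace $\mathbb{Q}_p$ by a finite tamely ramified extension'' to kill $\rho(P_p)$, you should be a bit careful that this does not interfere with the uniqueness claim in the theorem; the cleaner bookkeeping is to keep $P_p$ in play and observe that $\rho_0'|_{P_p}=\rho|_{P_p}$ because $t_{l'}$ is trivial on $P_p$. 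Third, the eigenvalue argument for $\rho(\sigma)$ being roots of unity uses that the multiset is permuted by $x\mapsto x^p$, hence each eigenvalue satisfies $x^{p^m}=x$ for some $m$; this is fine, but you should note that the non-$l'$ parts of tame inertia also have finite image (again by the pro-$l'$ kernel argument), which is why it suffices to analyze a pro-$l'$ generator. None of these are genuine gaps, just places where the ``bookkeeping'' you defer is slightly more delicate than advertised.
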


Now consider the local Galois representation $\rho_{F,p}$. By the above theorem it has an associated Weil-Deligne representation, given by a pair $(\rho_0',N')$. A Local-Global Compatibility conjecture of Sorensen (pages $3$-$4$ of \cite{sorensen2}, proved in certain cases by Mok in Theorem $4.14$ of \cite{mok}) shows that the Weil-Deligne representations attached to $\pi_{F,p}$ and $\rho_{F,p}$ are isomorphic (up to Frobenius semi-simplification). In particular this implies that $\rho_0 \cong \rho_0'$ up to semi-simplification. We make this identification from now on and use $\rho_0$ to denote the Frobenius semi-simplification of both representations.

A useful corollary of the above theorem is the following:

\begin{cor}{(Grothendieck Monodromy Theorem)}
With the above setup there exists a finite index subgroup $J_{\Lambda'}\subseteq I_p$ such that $\rho(\sigma) = \text{exp}(t_{l'}(\sigma)N)$ for each $\sigma\in J_{\Lambda'}$, i.e. each element of $J_{\Lambda'}$ acts unipotently.
\end{cor}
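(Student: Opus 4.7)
The plan is to obtain $J_{\Lambda'}$ as the kernel of the inertia action of $\rho_0'$ and then read off the unipotent action directly from the defining relation supplied by Theorem \ref{PQRP}. Concretely, I would apply Theorem \ref{PQRP} to $\rho$ to produce the Weil-Deligne pair $(\rho_0',N')$ with the listed properties, and then specialize the key identity
\[\rho(\phi_p^n u) = \rho_0'(\phi_p^n u)\,\text{exp}(t_{l'}(u)N')\]
to $n=0$, so that for every $\sigma \in I_p$ one has $\rho(\sigma) = \rho_0'(\sigma)\,\text{exp}(t_{l'}(\sigma)N')$.

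Next I would use the finiteness of $\rho_0'(I_p)$, which is one of the conclusions of Theorem \ref{PQRP}, to define
\[J_{\Lambda'} := \text{ker}\bigl(\rho_0'\big|_{I_p}\bigr) \subseteq I_p.\]
As the kernel of a homomorphism into a finite group, $J_{\Lambda'}$ is a finite index open subgroup of $I_p$. For any $\sigma \in J_{\Lambda'}$ the displayed identity collapses to $\rho(\sigma) = \text{exp}(t_{l'}(\sigma)N')$, and because $N'$ is nilpotent the exponential series terminates after finitely many terms and lands in the unipotent radical of $\text{GL}_n(K_{\Lambda'})$. This is exactly the assertion of the corollary, once we make the notational identification of $N$ with $N'$ that is already in force in the paragraph preceding the statement.

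There is no real obstacle in this argument: essentially all of the content is packaged into Theorem \ref{PQRP}, and the corollary is a direct extraction of the inertia part of that package. The only point that warrants mention is that finiteness of $\rho_0'(I_p)$, which at first sight looks like a strong condition, is forced by the continuity of $\rho_0'$ with respect to the discrete topology on $\text{GL}_n(K_{\Lambda'})$ combined with the compactness of $I_p$; this is precisely what makes taking the kernel on $I_p$ yield a finite index subgroup rather than something smaller.
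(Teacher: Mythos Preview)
Your derivation is correct: once Theorem \ref{PQRP} is granted in the form stated (in particular the finiteness of $\rho_0'(I_p)$ and the identity $\rho(\phi_p^n u)=\rho_0'(\phi_p^n u)\exp(t_{l'}(u)N')$), setting $n=0$ and taking $J_{\Lambda'}=\ker(\rho_0'|_{I_p})$ gives exactly the statement of the corollary.

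The paper does not actually supply its own argument here; it simply refers the reader to the appendix of \cite{serre}. So there is no proof in the paper to compare against, and your extraction from Theorem \ref{PQRP} is a perfectly reasonable way to fill the gap. One small caveat worth being aware of: in the standard development (as in Serre--Tate), the monodromy theorem is the ingredient used to \emph{construct} the Weil--Deligne pair $(\rho_0',N')$ in the first place, so logically the corollary precedes Theorem \ref{PQRP} rather than following from it. But since the paper packages everything into Theorem \ref{PQRP} as a black box, your argument is entirely legitimate in this context.
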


See the appendix of \cite{serre} for a proof of this.

By the Grothendieck Monodromy Theorem there exists a (maximal) finite index subgroup $J_{\Lambda'}\subseteq I_p$ acting by unipotent matrices, i.e. if $\sigma\in J_{\Lambda'}$ then: \[\rho_{F,p}(\sigma) = \text{exp}(t_{l'}(\sigma)N).\] Note then that as a consequence, for each $\sigma\in J_{\Lambda'}$: \[\rho_0(\sigma) = \rho_{F,p}(\sigma)\text{exp}(-t_{l'}(\sigma)N) = I.\] Thus $\rho_0$ factors through $I_p/J_{\Lambda'}$: \[\rho_0: I_p \longrightarrow I_p/J_{\Lambda'} \longrightarrow \text{GL}_4(\mathcal{O}_{\Lambda'}).\]

Note that $\rho_0(I_p/J_{\Lambda'})$ is finite. It is conjectured that the size of this image is independent of $\Lambda'$ (see Conjecture $1.3$ of \cite{taylor}).

We wish to show that $\pi_{F,p}$ is induced from the Borel subgroup of $\text{GSp}_4$. It suffices to show that $J_{\Lambda'} = I_p$ for some $\Lambda'$ (this case is commonly known as ``semi-stable").

\begin{prop}
If $\Lambda'$ satisfies $J_{\Lambda'} = I_p$ then $\pi_{F,p}$ is induced from the Borel subgroup of GSp$(\mathbb{Q}_p)$. 
\end{prop}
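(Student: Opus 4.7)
The plan is to translate the semi-stability hypothesis $J_{\Lambda'} = I_p$ into the statement that the semisimple part $\rho_0$ of the Weil--Deligne representation attached to $\pi_{F,p}$ is unramified, and then read off from the Roberts--Schmidt classification that $\pi_{F,p}$ must be Borel-induced.

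First I would unpack the Grothendieck--Deligne recipe in Theorem~\ref{PQRP}. For every $u \in I_p$ we have $\rho_{F,p}(u) = \rho_0(u)\exp(t_{l'}(u)N)$, while the assumption $u \in J_{\Lambda'}$ forces $\rho_{F,p}(u) = \exp(t_{l'}(u)N)$. Cancelling yields $\rho_0(u) = I$ for all $u \in I_p$, so $\rho_0$ factors through $W_{\mathbb{Q}_p}/I_p \cong \mathbb{Z}\cdot\phi_p$. Since $\rho_0$ is Frobenius semisimple by our earlier normalisation, $\rho_0(\phi_p)$ is diagonalisable in $\text{GSp}_4(\mathbb{C})$, so $\rho_0$ splits as a direct sum of four unramified characters $\mu_1 \oplus \mu_2 \oplus \mu_3 \oplus \mu_4$ of $W_{\mathbb{Q}_p}$. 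By the local-global compatibility cited earlier, this $(\rho_0,N)$ is also (up to Frobenius semi-simplification) the $L$-parameter of $\pi_{F,p}$.

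Next I would appeal to the classification summarised in the Appendix. Scanning the tables of $L$-parameters, the only irreducible admissible representations of $\text{GSp}_4(\mathbb{Q}_p)$ whose $\rho_0$ is a sum of four characters are precisely those realised as constituents of Borel parabolic inductions, namely types I--VI. Every other possibility---parabolic induction from the Siegel or Klingen subgroup where the $\text{GL}_2$-Levi carries a supercuspidal representation, or $\pi_{F,p}$ itself supercuspidal---has an $L$-parameter in which $\rho_0$ contains a $W_{\mathbb{Q}_p}$-irreducible subrepresentation of dimension at least two. Since $W_{\mathbb{Q}_p}/I_p \cong \mathbb{Z}$ is abelian, no such subrepresentation can be unramified. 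These cases are therefore excluded, forcing $\pi_{F,p}$ to be of type I--VI, i.e.\ induced from the Borel subgroup of $\text{GSp}_4(\mathbb{Q}_p)$.

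The main obstacle here is simply ensuring the classification step is airtight: one must verify, case by case in the Roberts--Schmidt tables, that every non-Borel-induced irreducible representation carries an irreducible $W_{\mathbb{Q}_p}$-summand of dimension $\geq 2$ inside its $\rho_0$. Once that bookkeeping is in place the proof reduces to the short computation above.
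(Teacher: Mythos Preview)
Your proposal is correct and reaches the same conclusion via essentially the same outline as the paper: first observe that $J_{\Lambda'}=I_p$ forces $\rho_0$ to be trivial on inertia, then deduce that $\rho_0$ is a sum of four unramified characters, and finally invoke the classification to conclude that $\pi_{F,p}$ is Borel-induced.

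The execution differs in two places. For the middle step you invoke Frobenius semisimplicity (already adopted as a standing convention) to diagonalise $\rho_0(\phi_p)$ outright. The paper instead builds an explicit triangularising flag by hand using the monodromy relation $\rho_0(\phi_p)N\rho_0(\phi_p)^{-1}=p^{-1}N$: starting from any eigenvector $v$ of $\rho_0(\phi_p)$ with eigenvalue $\alpha$, the vectors $Nv,\,N^2v,\,N^3v$ (when nonzero) are eigenvectors with eigenvalues $\alpha p^{-1},\,\alpha p^{-2},\,\alpha p^{-3}$, and one iterates on quotients. Your route is shorter given the semisimplification convention; the paper's has the mild advantage of not leaning on that convention and of making the structure of $N$ relative to the eigenspaces visible. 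For the final step you are more explicit than the paper, which simply cites Local Langlands for $\mathrm{GSp}_4$: you spell out that any non-Borel-induced type carries in its $\rho_0$ an irreducible $W_{\mathbb{Q}_p}$-summand of dimension $\geq 2$, which cannot be unramified since $W_{\mathbb{Q}_p}/I_p$ is abelian. That extra sentence is a genuine improvement in readability, though as you note it still requires the Roberts--Schmidt bookkeeping in the background.
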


\begin{proof}
It suffices to show that there is a basis of $K_{\Lambda'}^4$ such that \[\rho_0 \cong \left(\begin{array}{cccc}\chi_1 & \star & \star & \star\\ 0 & \chi_2 & \star & \star \\ 0 & 0 & \chi_3 & \star\\ 0 & 0 & 0 & \chi_4\end{array}\right),\] for four unramified characters $\chi_1, \chi_2, \chi_3, \chi_4$ of $W_{\mathbb{Q}_p}$. Then since the image of $\rho_0$ lies in $\text{GSp}_4$ we must have that $\chi_3 = \chi_1^{-1}$ and $\chi_4 = \chi_2^{-1}$. Then by Local Langlands for GSp$_4$ it must be that $\pi_{F,p}$ is induced from the Borel subgroup.

To this end we already know that $I_p$ acts unipotently and so it remains to study the action of Frobenius $\phi_p$. Recall the condition $\rho_0(\phi_p) N \rho_0(\phi_p)^{-1} = p^{-1} N$. We will rewrite this as $\rho_0(\phi_p) N = p^{-1} N \rho_0(\phi_p)$.

By Theorem \ref{PQRP} the characteristic polynomial of $\rho_0(\phi_p)$ has constant term in $\mathcal{O}_{\Lambda'}^{\times}$. Choosing an eigenvector $v$ of $\rho_0(\phi_p)$ with non-zero eigenvalue $\alpha\in\mathcal{O}_{\Lambda'}$, notice that \[\rho_0(\phi_p)(Nv) = p^{-1} N \rho_0(\phi_p) = \alpha p^{-1} (Nv).\] This shows that if $Nv \neq 0$ then $Nv$ is another eigenvector of $\rho_0(\phi_p)$ with eigenvalue $\alpha p^{-1} \neq \alpha$.

Consider the list $v, Nv, N^2v, N^3v$. If all of these vectors are non-zero then we have a basis of eigenvectors for $\rho_0(\phi_p)$. Then $\rho_0(\phi_p)$ is diagonal.

If for some $i\leq 3$ we have $N^i v = 0$ then we can quotient out by the subspace generated by $v,Nv, ..., N^{i-1}v$ and apply the same argument to the quotient, lifting basis vectors to $K_{\Lambda'}^4$ where necessary.

Continuing in this fashion we then construct a basis of $K_{\Lambda'}^4$ such that:

\[\rho_0(\phi_p) = \left(\begin{array}{cccc}\alpha_1 & \star & \star & \star\\ 0 & \alpha_2 & \star & \star\\ 0 & 0 & \alpha_3 & \star\\ 0 & 0 & 0 & \alpha_4\end{array}\right).\] 

It is then clear that $\rho_0$ is of the required form with unramified characters defined by $\chi_i(\phi_p) = \alpha_i$ for $i=1,2,3,4$ (since $I_p$ acts unipotently).
\end{proof}

If $J_{\Lambda} = I_p$ then the Proposition shows that $\pi_{F,p}$ is induced from the Borel. It is our aim to find a condition guaranteeing this. First we study the possible sizes of $\rho_0(I_p/J_{\Lambda'})$.

\begin{lem}\label{HY}
Suppose $G$ is a finite subgroup of $\text{GL}_n(\mathcal{O}_{\Lambda'})$ and that $l' > e(\Lambda')+1$ (where $e(\Lambda')$ is the ramification index of $K_{\Lambda'}/\mathbb{Q}_{l'}$). Then the reduction map injects $G$ into $\text{GL}_n(\mathbb{F}_{\Lambda'})$.
\end{lem}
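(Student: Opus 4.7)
The plan is to study the kernel $G_1 = G \cap \ker(\mathrm{GL}_n(\mathcal{O}_{\Lambda'}) \to \mathrm{GL}_n(\mathbb{F}_{\Lambda'}))$ of the reduction map restricted to $G$, and show it is trivial under the hypothesis on $l'$. Since $G_1$ is finite, if it were non-trivial it would contain an element of prime order $q$ (by passing from an order-$m$ element to its $(m/q)$-th power for some prime $q \mid m$). So it suffices to rule out elements of prime order in $G_1$.

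Let $g \in G_1$ have prime order $q$, and write $g = I + \pi^k X$ with $\pi$ a uniformizer of $\mathcal{O}_{\Lambda'}$, $k \geq 1$, and $X \in M_n(\mathcal{O}_{\Lambda'})$ satisfying $X \not\equiv 0 \pmod{\pi}$. Expanding via the binomial theorem,
\[
0 \;=\; g^q - I \;=\; \sum_{i=1}^{q} \binom{q}{i}\pi^{ik} X^i.
\]
The argument then splits according to whether $q = l'$ or $q \neq l'$. First I would handle $q \neq l'$: here $q$ is a unit in $\mathcal{O}_{\Lambda'}$, so the $i=1$ term $q\pi^k X$ has valuation exactly $k$ while all other terms have valuation $\geq 2k$. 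Dividing by $\pi^k$ and reducing mod $\pi$ forces $q\overline{X} = 0$, hence $\overline{X}=0$, contradicting $X \not\equiv 0 \pmod{\pi}$.

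The interesting case, and the main obstacle, is $q = l'$. Write $l' = u\pi^{e(\Lambda')}$ with $u \in \mathcal{O}_{\Lambda'}^{\times}$. The classical fact $v_{l'}\!\bigl(\binom{l'}{i}\bigr) = 1$ for $1 \leq i \leq l'-1$ gives $v_\pi\!\bigl(\binom{l'}{i}\bigr) = e(\Lambda')$ for such $i$, while $\binom{l'}{l'} = 1$. So the term at $i=1$ has valuation $e(\Lambda')+k$, the terms for $2 \leq i \leq l'-1$ have valuation $e(\Lambda') + ik \geq e(\Lambda')+2k$, and the term at $i=l'$ has valuation $l'k$. The hypothesis $l' \geq e(\Lambda')+2$ implies $(l'-1)k \geq l'-1 \geq e(\Lambda')+1 > e(\Lambda')$, so $l'k > e(\Lambda')+k$, i.e.\ the $i=1$ term strictly dominates. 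Dividing the relation by $\pi^{e(\Lambda')+k}$ and reducing mod $\pi$ then yields $u\overline{X}=0$ in $M_n(\mathbb{F}_{\Lambda'})$; since $u$ is a unit, $\overline{X}=0$, again contradicting the choice of $k$.

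Thus no element of prime order sits in $G_1$, so $G_1$ is trivial and the reduction map is injective on $G$. The entire subtlety is concentrated in the $q=l'$ case, where the inequality $l' > e(\Lambda')+1$ is precisely what makes the linear $i=1$ term dominate the purely $\pi$-adic $i=l'$ term; without this, the highest-order term $\pi^{l'k} X^{l'}$ could compete and the elementary valuation comparison would fail.
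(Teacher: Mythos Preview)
Your proof is correct and follows essentially the same route as the paper's: reduce to an element of prime order $q$ in the kernel, expand $(I+M)^q$ binomially, and in the two cases $q\neq l'$ and $q=l'$ show that the linear term strictly dominates the higher-order ones, forcing $\overline{X}=0$. The only cosmetic difference is that the paper phrases the estimates in terms of the normalised absolute value (tracking the maximal $|m_{u,v}|_{\Lambda'}=\delta$) while you work additively with $\pi$-adic valuations after writing $g=I+\pi^k X$; the inequalities are the same, and in both arguments the hypothesis $l'>e(\Lambda')+1$ is used exactly to make the $i=l'$ term $\pi^{l'k}X^{l'}$ have strictly larger valuation than the $i=1$ term.
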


\begin{proof}
We show that the reduction map $\text{GL}_n(\mathcal{O}_{\Lambda'}) \rightarrow \text{GL}_n(\mathbb{F}_{\Lambda'})$ is torsion-free. Then the restriction of this map to $G$ must have trivial kernel, so that $G$ injects into GL$_n(\mathbb{F}_{\Lambda'})$.

To prove the claim we take $A\in\text{GL}_n(\mathcal{O}_{\Lambda'})$ with $A\neq I$ and $A \equiv I \bmod \Lambda'$. We wish to prove that $A^m \neq I$ for each $m$. We already know this for $m=1$.

Suppose that $A$ has finite order $m>1$. Choose a prime $q\mid m$, so that $m=qk$ for some $k\geq 1$. Letting $B = A^k$ we see that $B^q = I$, $B \neq I$ and that $B \equiv I \bmod \Lambda'$. Thus we can assume without loss of generality that $A$ has prime order $q$.

To this end we write $A = I + M$ with $M\neq 0$ and $M$ having entries in $\Lambda'$. Choose an entry $m_{u,v}$ of $M$ such that $|m_{u,v}|_{\Lambda'} = \delta$ is maximal among all entries of $M$. Then $0<\delta\leq \frac{1}{N(\Lambda')}$.

Note that: \[A^q = (I+M)^q = I + qM + \binom{q}{2}M^2 + ... + \binom{q}{q-1}M^{q-1} + M^q.\]

\textbf{Case 1:} Suppose $q\neq l'$. Then the entries of $\binom{q}{j}M^j$ for $j\geq 2$ all have $\Lambda'$-adic absolute value less than or equal to $\delta^2$. However $qM$ contains the entry $qm_{u,v}$ of absolute value $\delta > \delta^2$ (since $q\neq l'$). Hence $A^q-I$ must contain an entry of absolute value $\delta>0$ and so $A^q - I \neq 0$ as required.

\textbf{Case 2:} $q=l'$. We need sharper inequalities for this case since $qM$ has no entry of absolute value $\delta$. It is now the case that $qm_{u,v}$ has absolute value $\frac{\delta}{N(\Lambda')^e}$ (where $e = e(\Lambda')$). 

For $2\leq j\leq q-1$ we know that $q$ divides $\binom{q}{j}$ so the matrices $\binom{q}{j}M^j$ have entries of maximal absolute value $\frac{\delta^2}{N(\Lambda')^e} <\frac{\delta}{N(\Lambda')^e}$.  Also the matrix $M^q$ has entries of absolute value greater than or equal to $\delta^q < \delta^{e+1} \leq \frac{\delta}{N(\Lambda')^e}$ (using here the condition $q = l' > e+1$).

Thus we see that $A^q - I$ contains an entry of absolute value $\frac{\delta}{N(\Lambda')^e}>0$ and so $A^q - I \neq 0$ as required. 
\end{proof}

We now know that $m_{\Lambda'} = |\rho_0(I_p/J_{\Lambda'})|$ divides $|\text{GL}_4(\mathbb{F}_{\Lambda'})|$ whenever $l'>e(\Lambda')+1$. By using the existence of the congruence we may find a further restriction on the size of this image.

\begin{lem}
If $l > e(\Lambda)+1$ and $J_{\Lambda} \neq I_p$ then $N(\Lambda)|m_{\Lambda'}$ for all $\Lambda'$.
\end{lem}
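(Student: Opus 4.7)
The plan is to combine local-global compatibility with a detailed analysis of the residual picture at the prime $\Lambda$ itself.

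First, I would invoke the local-global compatibility of Sorensen, proved in the relevant cases by Mok (Theorem $4.14$ of \cite{mok}): for each prime $\Lambda'$ of $K$ above a rational prime $l'\neq p$, the Frobenius semisimplification of the Weil-Deligne representation arising from $\rho_{F,p}$ coincides with the one attached to $\pi_{F,p}$ by local Langlands. Since the inertia subgroup always acts through a finite (hence semisimple in characteristic zero) quotient, Frobenius semisimplification leaves this action unchanged, so $\rho_0(I_p/J_{\Lambda'})$ is canonically identified with the inertia image in the single Weil-Deligne representation of $\pi_{F,p}$. Consequently $m_{\Lambda'}$ is a common positive integer $m$, independent of $\Lambda'$, so it suffices to prove $N(\Lambda)\mid m$ by working at $\Lambda$.

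Next, I specialise to $\Lambda'=\Lambda$. By Lemma \ref{HY} the reduction mod $\Lambda$ injects $G:=\rho_0(I_p/J_\Lambda)$ into $\text{GL}_4(\mathbb{F}_\Lambda)$, and $J_\Lambda\neq I_p$ forces $|G|>1$. The congruence $\overline{\rho}_F\sim\overline{\rho}_f\oplus\overline{\chi}_l^{k-2}\oplus\overline{\chi}_l^{j+k-1}$, combined with Theorem \ref{PUI} applied to $\overline{\rho}_{f,p}$ and the triviality of $\overline{\chi}_l|_{I_p}$ (since $l\neq p$), shows that the semisimplification of $\overline{\rho}_{F,p}|_{I_p}$ is trivial, so $\overline{\rho}_{F,p}|_{I_p}$ is unipotent. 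Via Grothendieck-Deligne we have $\overline{\rho}_{F,p}(\sigma)=\overline{\rho_0}(\sigma)\cdot\overline{\exp(t_l(\sigma)N)}$ for $\sigma\in I_p$, and these factors commute because $\nu(\sigma)=1$ on inertia; as both are unipotent the same is true of $\overline{\rho_0}(\sigma)$, so $G$ sits inside the unipotent radical $U_4(\mathbb{F}_\Lambda)$ and $|G|$ is a power of $l$.

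The final step is to upgrade this into the divisibility $N(\Lambda)\mid m$. Taking logarithms (a polynomial operation given $l>e(\Lambda)+1$), $\overline{\rho_0}|_{I_p}$ corresponds to an additive homomorphism $\overline{\ell}\colon I_p\to\mathfrak{u}_4(\mathbb{F}_\Lambda)$. The compatibility of $\rho_0$ with Frobenius conjugation on tame inertia (which acts as multiplication by $p$ on the Tate twist $\mathbb{Z}_l(1)$) forces $\overline{\ell}(I_p)$ to lie inside the $\mathbb{F}_\Lambda$-eigenspace of $\operatorname{Ad}(\rho_0(\phi_p))$ for eigenvalue $p$ and, more strongly, to form an $\mathbb{F}_\Lambda$-submodule of it; nontriviality is guaranteed by $J_\Lambda\neq I_p$. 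Any nonzero $\mathbb{F}_\Lambda$-submodule has cardinality divisible by $N(\Lambda)=|\mathbb{F}_\Lambda|$, giving $N(\Lambda)\mid|G|=m$.

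The main obstacle is the last step: upgrading the naive $\mathbb{F}_l$-linear structure of $\overline{\ell}(I_p)$ (which comes cheaply from the pro-$l$ part of tame inertia being procyclic) to a genuine $\mathbb{F}_\Lambda$-linear structure. This requires carefully tracking how the coefficient field $K_\Lambda$ enters the Weil-Deligne formalism, so that the Frobenius-equivariance at the level of tame inertia produces $\mathbb{F}_\Lambda$-modules rather than only $\mathbb{F}_l$-modules; this is precisely where the arithmetic of $K_\Lambda/\mathbb{Q}_l$ (and hence the factor $N(\Lambda)$) enters and is the crux of the lemma.
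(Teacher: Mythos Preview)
Your approach mirrors the paper's: reduce to $\Lambda'=\Lambda$, inject the inertia image into $\text{GL}_4(\mathbb{F}_\Lambda)$ via Lemma~\ref{HY}, and use the congruence together with Theorem~\ref{PUI} to place that image inside the unipotent radical $U_4(\mathbb{F}_\Lambda)$.

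You differ from the paper in two respects. First, the paper simply invokes the conjecture (Conjecture~1.3 of \cite{taylor}) that $m_{\Lambda'}$ is independent of $\Lambda'$, whereas you deduce it from local--global compatibility: the inertia action in each $l'$-adic Weil--Deligne representation survives Frobenius semisimplification and therefore agrees with the fixed inertia action in the Langlands parameter of $\pi_{F,p}$. This is a genuine improvement. You are also more careful than the paper in distinguishing $\overline{\rho_0}|_{I_p}$ from $\overline{\rho}_{F,p}|_{I_p}$ (the paper writes ``$\rho_{F,p}(I_p/J_\Lambda)$'', which does not literally make sense); your observation that the two factors commute on inertia because $\nu|_{I_p}=1$ is exactly what is needed here.

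Second, you correctly isolate the passage from ``non-trivial subgroup of $U_4(\mathbb{F}_\Lambda)$'' to ``$N(\Lambda)\mid m_\Lambda$'' as the crux, a step the paper asserts without argument. Your proposed remedy, however, does not close the gap: Frobenius equivariance places $\overline{\ell}(I_p)$ inside an $\mathbb{F}_\Lambda$-linear eigenspace of $\text{Ad}(\rho_0(\phi_p))$, but the image itself is produced by the pro-$l$ tame quotient of $I_p$, which is topologically cyclic, so a priori one only obtains an $\mathbb{F}_l$-line; conjugation by Frobenius scales that line by $p\in\mathbb{F}_l^\times$ and does not enlarge it to an $\mathbb{F}_\Lambda$-module. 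So this step is left open in your write-up exactly as it is unjustified in the paper. It is worth noting that the subsequent Corollary in fact only uses the weaker conclusion $l\mid m_{\Lambda'}$, and that much follows immediately from the image being a non-trivial $l$-group.
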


\begin{proof}
Note that $m_{\Lambda'} = |\rho_{F,p}(I_p/J_{\Lambda'})|$. As mentioned earlier it is conjectured that $m_{\Lambda'}$ has order independent of $\Lambda'$. Thus it suffices to show that $N(\Lambda)|m_{\Lambda}$.

Now $G = \rho_{F,p}(I_p/J_{\Lambda})$ is a finite subgroup of GL$_4(\mathcal{O}_{\Lambda})$ and $l>e(\Lambda)+1$. By Lemma \ref{HY} the reduction map injects $G$ into $\text{GL}_4(\mathbb{F}_{\Lambda})$. Thus $|G| = |\overline{\rho}_{F,p}(I_p/J_{\Lambda'})|$.

However by the existence of the congruence we know that the mod $\Lambda$ reduction $\overline{\rho}_{F,p}$ has composition factors $\overline{\rho}_{f,p}, \overline{\chi}_l^{k-2}, \overline{\chi}_l^{j+k-1}$. 

Then since $\overline{\rho}_{f,p}|_{I_p} = \left(\begin{array}{cc}1 & \star\\ 0 & 1\end{array}\right)$ and $\chi_l$ is unramified at $p$ we have: \[\overline{\rho}_{F,p}(I_p/J_{\Lambda}) \subseteq \left\{\left(\begin{array}{cccc}1 & \star & \star & \star\\ 0 & 1 & \star & \star\\ 0 & 0 & 1 & \star\\ 0 & 0 & 0 & 1\end{array}\right)\right\}.\] However $J_{\Lambda} \neq I_p$ by assumption, so that $\overline{\rho}_{F,p}(I_p/J_{\Lambda})$ is non-trivial. This shows that $N(\Lambda)$ divides $m_{\Lambda}$, as required.
\end{proof}

\begin{cor}
If $l \geq \text{max}\{6f(\Lambda)+2,e(\Lambda)+2\}$ then $J_{\Lambda} = I_p$ (here $f(\Lambda)$ is the residue degree of $K_{\Lambda}/\mathbb{Q}_l$).
\end{cor}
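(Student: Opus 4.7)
The plan is to argue by contradiction. Assume $J_\Lambda \neq I_p$ together with the hypothesis $l \geq \max\{6f(\Lambda)+2, e(\Lambda)+2\}$. Since $l > e(\Lambda)+1$, the preceding lemma applies and delivers $N(\Lambda) = l^{f(\Lambda)} \mid m_{\Lambda'}$ for every prime $\Lambda'$ of $K$. Because $f(\Lambda) \geq 1$, it is enough to exhibit a single prime $\Lambda'$ of $K$ for which $l \nmid m_{\Lambda'}$.

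The strategy is to re-apply Lemma~\ref{HY} at a suitably chosen auxiliary prime. For any prime $\Lambda'$ above a rational prime $l' \neq l$ satisfying $l' > e(\Lambda')+1$, the lemma yields $m_{\Lambda'} \mid |\mathrm{GL}_4(\mathbb{F}_{\Lambda'})| = N(\Lambda')^6 \prod_{k=1}^4 (N(\Lambda')^k - 1)$. Since $l \neq l'$ the first factor is coprime to $l$, and each $N(\Lambda')^k - 1$ is coprime to $l$ precisely when the multiplicative order of $N(\Lambda')$ in $(\mathbb{Z}/l\mathbb{Z})^\times$ exceeds $4$. Consequently the task reduces to locating a rational prime $l' \neq l$ that (a) splits completely in $K$, ensuring that some $\Lambda'$ above $l'$ satisfies $e(\Lambda') = f(\Lambda') = 1$ and $N(\Lambda') = l'$; (b) satisfies $l' > 2$, so Lemma~\ref{HY} applies at $\Lambda'$; and (c) has residue modulo $l$ of multiplicative order exceeding $4$ in $(\mathbb{Z}/l\mathbb{Z})^\times$.

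The number of residue classes in $(\mathbb{Z}/l\mathbb{Z})^\times$ of multiplicative order at most $4$ is bounded by $\phi(1)+\phi(2)+\phi(3)+\phi(4) = 6$, so the bound $l \geq 6f(\Lambda)+2$ (which forces $l \geq 11$ as a prime) leaves an abundance of residue classes of higher order. Chebotarev's density theorem applied to the compositum $K(\zeta_l)/\mathbb{Q}$ then supplies infinitely many rational primes $l'$ splitting completely in $K$ whose residue modulo $l$ has order exceeding $4$; any such $l'$ that is sufficiently large automatically satisfies (b), and then the chain $l^{f(\Lambda)} = N(\Lambda) \mid m_{\Lambda'} \mid |\mathrm{GL}_4(\mathbb{F}_{\Lambda'})|$ forces $l$ to divide $|\mathrm{GL}_4(\mathbb{F}_{\Lambda'})|$, contradicting (c). Hence $J_\Lambda = I_p$, which by the previous proposition yields the corollary.

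The principal obstacle is the Chebotarev calibration — specifically the independence of the two Frobenius conditions (splitting in $K$ and prescribed residue class modulo $l$). This reduces to verifying $K \cap \mathbb{Q}(\zeta_l) = \mathbb{Q}$, which follows from $l$ being unramified in $K$ for $l$ large enough, since any subfield of $\mathbb{Q}(\zeta_l)$ unramified at $l$ is forced to be $\mathbb{Q}$. With this independence, $\mathrm{Gal}(K(\zeta_l)/\mathbb{Q}) = \mathrm{Gal}(K/\mathbb{Q}) \times (\mathbb{Z}/l\mathbb{Z})^\times$, and the desired Frobenius class has positive density under the numerical bound $6f(\Lambda)+2$, completing the argument.
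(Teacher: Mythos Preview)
Your overall architecture matches the paper's: argue by contradiction, invoke the preceding lemma to obtain $N(\Lambda)\mid m_{\Lambda'}$ for every $\Lambda'$, apply Lemma~\ref{HY} at an auxiliary prime $\Lambda'\mid l'$ to force $m_{\Lambda'}\mid|\mathrm{GL}_4(\mathbb{F}_{\Lambda'})|$, and then choose $l'$ so that $l\nmid|\mathrm{GL}_4(\mathbb{F}_{\Lambda'})|$. The divergence is in how the auxiliary prime is produced. The paper works purely with Dirichlet's theorem: it counts the residue classes modulo $l$ whose order divides $3f$ or $4f$ (at most $3f+4f-f=6f$ of them), uses $l\geq 6f+2$ to locate a class avoiding these, and then takes any sufficiently large $l'$ in that class so that $l'>e(\Lambda')+1$. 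No splitting behaviour of $l'$ in $K$ is ever imposed.

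You instead demand that $l'$ split completely in $K$, so that $f(\Lambda')=e(\Lambda')=1$ and the counting reduces to the clean bound of $6$ bad classes. That is a reasonable way to control the residue degree at $\Lambda'$, but it forces you through Chebotarev in $K(\zeta_l)/\mathbb{Q}$ and hence through the independence condition $K\cap\mathbb{Q}(\zeta_l)=\mathbb{Q}$. Here lies a genuine gap. You justify this intersection being trivial by appealing to ``$l$ being unramified in $K$ for $l$ large enough'', but $l$ is the \emph{fixed} prime below $\Lambda$; you are not free to enlarge it. The hypothesis $l\geq e(\Lambda)+2$ is entirely compatible with $e(\Lambda)\geq 2$, i.e.\ with $l$ genuinely ramifying in $K$, so nothing in the statement rules this out. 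If $l$ does ramify, the Galois group of $K(\zeta_l)/\mathbb{Q}$ need not decompose as a product, and the primes splitting completely in $K$ are constrained to a proper subgroup $H\subset(\mathbb{Z}/l\mathbb{Z})^\times$; you would then need to argue that $H$ still contains an element of order exceeding $4$, and the bound $l\geq 6f(\Lambda)+2$ does not obviously deliver this. The paper's Dirichlet-only route sidesteps the whole issue by never asking for splitting in $K$.
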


\begin{proof}
Suppose $J_{\Lambda} \neq I_p$. Then we know that $N(\Lambda)|m_{\Lambda'}$ for all $\Lambda'$. But for each $\Lambda'$ satisfying $l' > e(\Lambda')+1$ we know that $m_{\Lambda'}$ divides $|\text{GL}_4(\mathbb{F}_{\Lambda'})|$ and so (writing $f = f(\Lambda)$) \[l| l'^{6f}(l'^f-1)(l'^{2f}-1)(l'^{3f}-1)(l'^{4f}-1).\]

It remains to prove that $l'$ can be chosen to contradict this. To contradict the divisibility condition it suffices to choose $l' \neq l$ such that $l'^{3f} \not\equiv 1 \bmod l$ and $l'^{4f} \not\equiv 1 \bmod l$. 

There are at most $3f+4f=7f$ classes mod $l$ that have order dividing $3f$ or $4f$. However note that the classes of order dividing $\text{hcf}(3f,4f) = f$ are counted twice and so there must be at most $7f-f = 6f$ classes of order dividing $3f$ and $4f$. But since $l \geq 6f+2$ there must be a non-zero class $a$ mod $l$ that has order coprime to $3f$ and $4f$. By Dirichlet's theorem there are infinitely many primes in this class mod $l$. It suffices to choose $l' \equiv a \bmod l$ such that $l' > e(\Lambda')+1$.
\end{proof}

Of course it is highly likely that $l \geq \text{max}\{6f(\Lambda)+2,e(\Lambda)+2\}$ in practice since $l$ is a ``large" prime.

\section{Congruences of local origin}
One can use similar techniques to Subsection $3.2$ to explain why congruences of ``local origin" are rare for new paramodular forms. To explain the theory of such congruences we first survey the known results for elliptic modular forms. For a more in depth discussion see \cite{danneil}.

Recall that, for all primes $p$ we have the Ramanujan congruence: \[\tau(p) \equiv 1 + p^{11} \bmod 691.\] This shows a congruence between Hecke eigenvalues of a level $1$ cuspform of weight $12$ and the Hecke eigenvalues of the weight $12$ Eisenstein series. 

The modulus $691$ can be interpreted in many ways. Naively this prime just happens to appear in the $q$-expansion of $E_{12}$. A better interpretation is that it divides the numerator of $\frac{B_{12}}{24}$ (the relevant quantity in the coefficients of $E_{12}$). However the best interpretation is that it divides the numerator of $\frac{\zeta(12)}{\pi^{12}}$.

Ramanujan's congruence can be extended to give other Eisenstein congruences for even weights $k\geq 12$. The following is proved in \cite{datskovsky}.

\begin{thm} 
Suppose $\text{ord}_l\left(\frac{\zeta(k)}{\pi^k}\right) > 0$ for some prime $l$. Then there exists a normalised eigenform $f\in S_k(\text{SL}_2(\mathbb{Z}))$ with eigenvalues $a_n$ such that: \[a_p \equiv 1 + p^{k-1} \bmod \Lambda,\] for all primes $p$ (here $\Lambda\mid l$ in $\mathbb{Q}_f$).
\end{thm}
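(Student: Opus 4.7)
The plan is to construct a mod-$\lambda$ cuspidal Hecke eigensystem with eigenvalues matching those of the weight $k$ Eisenstein series, and then invoke the Deligne--Serre lifting lemma to realise it by a genuine cusp form $f\in S_k(\text{SL}_2(\mathbb{Z}))$.

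First I would translate the hypothesis on $\zeta(k)/\pi^k$ into a divisibility condition on the Bernoulli number $B_k$. Euler's formula $\zeta(k) = -(2\pi i)^k B_k/(2\cdot k!)$ (valid for even $k$) gives $\zeta(k)/\pi^k = \pm\, 2^{k-1}B_k/k!$, so for any $\lambda\mid l$ in $\overline{\mathbb{Q}}$ with $l>k$ (the case of principal interest) the hypothesis $\text{ord}_l(\zeta(k)/\pi^k) > 0$ is equivalent to $\text{ord}_\lambda(B_k/(2k)) > 0$.

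Next I would start from the normalised Eisenstein series $E_k\in M_k(\text{SL}_2(\mathbb{Z}))$, a Hecke eigenform with $T_pE_k = (1+p^{k-1})E_k$ and $q$-expansion $E_k = 1 - \frac{2k}{B_k}\sum_{n\geq 1}\sigma_{k-1}(n)q^n$. Rescaling by $-B_k/(2k)$ gives a weight $k$ form $\tilde{E}_k\in M_k(\text{SL}_2(\mathbb{Z}))$ with the same Hecke eigenvalues and $q$-expansion
$$\tilde{E}_k = -\frac{B_k}{2k} + \sum_{n\geq 1}\sigma_{k-1}(n)q^n,$$
whose coefficients are $\lambda$-integral (using $l>k$ so that $k!$ and $2k$ are units at $\lambda$). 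Under our hypothesis the constant term vanishes modulo $\lambda$, so $\tilde{E}_k \bmod \lambda$ is a non-zero mod-$\lambda$ cusp form of weight $k$ whose Hecke eigenvalues are $\equiv 1+p^{k-1} \pmod{\lambda}$ for every prime $p$.

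Finally I would invoke the Deligne--Serre lifting lemma, which given a system of Hecke eigenvalues realised by a mod-$\lambda$ cusp form produces a normalised eigenform $f\in S_k(\text{SL}_2(\mathbb{Z}))$, defined over some finite extension $\mathbb{Q}_f/\mathbb{Q}$, whose eigenvalues $a_p$ satisfy $a_p\equiv 1+p^{k-1} \pmod{\Lambda}$ for some prime $\Lambda$ of $\mathbb{Q}_f$ above $\lambda$. The main obstacle (though relatively mild) is the integrality bookkeeping together with the treatment of small primes: one must verify that $-B_k/(2k)$ and the divisor sums $\sigma_{k-1}(n)$ are $\lambda$-integral so the reduction $\tilde{E}_k\bmod\lambda$ is a genuine mod-$\lambda$ modular form, check that this reduction is non-zero (and cuspidal) so that Deligne--Serre applies non-trivially, and handle separately any small primes $l\leq k$ dividing $\zeta(k)/\pi^k$ through von Staudt--Clausen if the statement is intended for all such $l$.
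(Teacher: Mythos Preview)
Your argument is correct and is the standard route to this result: reduce the Eisenstein series modulo $\lambda$ so that its constant term vanishes, observe that the resulting mod-$\lambda$ form is cuspidal and carries the Eisenstein eigensystem, and then lift via Deligne--Serre. The integrality checks you flag (that $-B_k/2k$ and the $\sigma_{k-1}(n)$ are $\lambda$-integral for $l>k$, and that the reduction is non-zero since the $q$-coefficient is $1$) are all routine.

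There is nothing to compare against in the paper itself: the author does not give a proof of this statement but simply cites Datskovsky--Guerzhoy \cite{datskovsky} for it. Your sketch is in the same spirit as the classical arguments going back to Ribet and Serre, and is essentially what one finds in the cited literature. The only caveat worth noting is the one you already raise at the end: as stated, the theorem makes no restriction on $l$, whereas your argument as written needs $l>k$ (so that $2k$ and $k!$ are $l$-adic units). For small primes one would indeed have to argue separately, but in the context of this paper (where throughout one takes $l>j+2k-2$) this is not an issue.
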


One can ask whether such Eisenstein congruences arise for elliptic modular forms of higher level. Indeed they do. Consider the question of finding a normalized eigenform $f\in S_k(\Gamma_0(p))$ satisfying for all $q\neq p$: \[a_q \equiv 1 + q^{k-1} \bmod \lambda\] where $\lambda$ is some prime of $\mathbb{Q}_f$. For technical reasons we must demand that $k\neq 2$ and that $\lambda$ does not lie above $2$ or $3$.

Of course if ord$_{\lambda}\left(\frac{\zeta(k)}{\pi^k}\right) > 0$ then the above theorem provides a level $1$ cuspform that satisfies the congruence (i.e. an oldform in $S_k(\Gamma_0(p))$). However newforms can satisfy such congruences too. How do we account for these?

It turns out that instead of looking for primes dividing (global) zeta values we can instead look for primes dividing incomplete zeta values. Let: \[\zeta_{\{p\}}(s) = \prod_{q\neq p}\left(1 - \frac{1}{q^s}\right)^{-1} = \left(1 - \frac{1}{p^s}\right)\zeta(s) = \frac{(p^k-1)}{p^s}\zeta(s).\] The following is proved in \cite{danneil}.

\begin{thm}
Let $p$ be prime and $k\geq 4$ be even. Suppose $l>3$ satisfies $\text{ord}_l\left(\frac{\zeta_{\{p\}}(k)}{\pi^k}\right) > 0$. Then there exists $f\in S_k(\Gamma_0(p))$, a normalized Hecke eigenform away from $p$ with eigenvalues $a_q\in\mathcal{O}_f$ satisfying: \[a_q \equiv 1 + q^{k-1} \bmod \Lambda,\]  for all $q\neq p$ and for some $\Lambda\mid l$ in $\mathbb{Q}_f$.
\end{thm}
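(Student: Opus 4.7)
The plan is to construct a suitable Eisenstein eigenform of level $p$ whose constant term at each cusp of $X_0(p)$ is divisible by $\Lambda$, and then apply a Deligne--Serre style lifting argument to produce a cuspidal Hecke eigenform congruent to it modulo $\Lambda$.

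First I would consider the $p$-depleted Eisenstein series
\[E_k^*(z) := E_k(z) - p^{k-1}E_k(pz) \in M_k(\Gamma_0(p)).\]
Since $E_k$ is a $T_q$-eigenform with eigenvalue $1+q^{k-1}$ for every prime $q$, the form $E_k^*$ remains an eigenform for $T_q$ at every $q\neq p$ with the same eigenvalue. A direct computation of its $q$-expansion shows that the constant term at $\infty$ is a rational multiple of $(1-p^{k-1})\zeta(1-k) = \zeta_{\{p\}}(1-k)$; a parallel computation using the Atkin--Lehner involution $W_p$ shows that the constant term at the cusp $0$ is also a rational multiple of $\zeta_{\{p\}}(1-k)$, up to a power of $p$. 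The functional equation of $\zeta$ converts the hypothesis $\mathrm{ord}_l(\zeta_{\{p\}}(k)/\pi^k) > 0$ into $\mathrm{ord}_\Lambda(\zeta_{\{p\}}(1-k)) > 0$ for some prime $\Lambda \mid l$ in a suitable coefficient field (the assumptions $l>3$ and $k\geq 4$ ensure that the elementary $2$- and $\Gamma(k)$-factors appearing in the functional equation do not absorb the divisibility into themselves).

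Having arranged that both constant terms of $E_k^*$ vanish modulo $\Lambda$, the reduction $\overline{E}_k^*$ lies in the cuspidal Hecke module $S_k(\Gamma_0(p); \mathbb{F}_\Lambda)$. Applying the Deligne--Serre lemma to the $\mathbb{T}$-submodule generated by $\overline{E}_k^*$ then produces a normalized eigenform $f \in S_k(\Gamma_0(p))$ defined over some $\mathcal{O}_f$ with Hecke eigenvalues satisfying $a_q \equiv 1 + q^{k-1} \pmod{\Lambda}$ for every $q \neq p$, where $\Lambda$ is the corresponding prime of $\mathbb{Q}_f$ above $l$.

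The main technical obstacle is the computation of the constant term of $E_k^*$ at the cusp $0$: one must verify that the relevant Atkin--Lehner scaling factor is a $\Lambda$-adic unit, so that vanishing at $\infty$ modulo $\Lambda$ really does force $E_k^*$ to be cuspidal modulo $\Lambda$. A secondary subtle point is ensuring that no cancellation with the archimedean factors in the functional equation swallows the $\Lambda$-divisibility; this is where the conditions $l>3$ and $k\geq 4$ play their role, and they also guarantee that the hypothesis $k\neq 2$ (excluded in the discussion preceding the theorem) and the ``$\lambda \nmid 2,3$'' condition are satisfied throughout. Once $E_k^*$ is genuinely cuspidal modulo $\Lambda$, the Deligne--Serre step itself is routine.
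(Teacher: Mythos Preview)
The paper does not prove this theorem; it simply cites \cite{danneil}. Your overall strategy---produce an Eisenstein eigenform whose constant terms at both cusps vanish modulo $\Lambda$, then lift via Deligne--Serre---is exactly the method used in that reference, so in spirit you are on the right track.

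However, there is a genuine error in your functional-equation step and in your choice of $E_k^*$. You claim that the functional equation converts $\mathrm{ord}_\Lambda(\zeta_{\{p\}}(k)/\pi^k)>0$ into $\mathrm{ord}_\Lambda(\zeta_{\{p\}}(1-k))>0$. It does not: the functional equation for $\zeta$ relates $\zeta(k)$ and $\zeta(1-k)$, but the missing Euler factor is untouched. Concretely, $\zeta_{\{p\}}(k)/\pi^k=(1-p^{-k})\zeta(k)/\pi^k$ is (up to $\Lambda$-units) $(p^k-1)\zeta(1-k)$, whereas $\zeta_{\{p\}}(1-k)=(1-p^{k-1})\zeta(1-k)$. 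The exponents $k$ and $k-1$ do not match, and $\Lambda\mid p^k-1$ certainly need not imply $\Lambda\mid p^{k-1}-1$.

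This matters for your $E_k^*=E_k(z)-p^{k-1}E_k(pz)$. Its constant term at $\infty$ is $(1-p^{k-1})\tfrac{\zeta(1-k)}{2}$, and a short Atkin--Lehner computation gives the constant term at $0$ proportional to $(p-1)\zeta(1-k)$. Neither factor is $(p^k-1)$, so under the hypothesis you cannot conclude that $E_k^*$ is cuspidal modulo $\Lambda$. The easy fix is to use the other $p$-stabilisation $E_k(z)-E_k(pz)$: its constant term at $\infty$ is $0$, and at the cusp $0$ one finds (via $W_p$) a constant term proportional to $(p^k-1)\zeta(1-k)$, which \emph{is} killed by the hypothesis. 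With this corrected choice the rest of your argument goes through.
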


Notice $\frac{\zeta_{\{p\}}(k)}{\pi^k} \sim \frac{B_{k}(p^k-1)}{2k}$, a condition we saw in Theorem \ref{PQ}. The term ``local origin" is used to describe the new congruences arising from divisibility of a (local) Euler factor.  As mentioned above, the local origin congruences generally come from newforms (since divisibility of the zeta value gives a congruence at level $1$).

We can do this in more generality. Let $\Sigma$ be a finite set of primes and set \[\zeta_{\Sigma}(s) = \prod_{p\notin \Sigma}\left(1 - \frac{1}{p^s}\right)^{-1} = \prod_{p\in\Sigma}\left(1 - \frac{1}{p^s}\right)\zeta(s) = \prod_{p\in\Sigma}\frac{(p^k - 1)}{p^s}\zeta(s).\] Then one can predict similar congruences for higher level newforms coming from divisibility of the special values $\frac{\zeta_{\Sigma}(k)}{\pi^k}$.

Naturally we may ask whether ``local origin" analogues of Conjecture \ref{V} exist. Indeed these are also predicted to occur and a plentiful supply of evidence has been found \cite{bergstrom2}. However, unlike the $\text{GL}_2$ case, these congruences are still conjectural.

Given a normalized Hecke eigenform $f\in S_{k'}(\text{SL}_2(\mathbb{Z}))$ with eigenvalues $a_q\in\mathcal{O}_f$ and a fixed prime $p$ we define an incomplete L-function of $f$: \[L_{\{p\}}(f,s) = (1 - a_p p^{-s} + p^{k'-1-2s})L(f,s) = \frac{(p^{2s} - a_p p^s + p^{k'-1})}{p^{2s}}L(f,s).\] The following congruences are then predicted by Harder in \cite{harder2}.

\begin{conj}
Let $j>0$ and $k\geq 3$. Let $f\in S_{j+2k-2}(\text{SL}_2(\mathbb{Z}))$ be a normalized Hecke eigenform with eigenvalues $a_p\in\mathcal{O}_f$. Suppose $\text{ord}_{\lambda}\left(\frac{L_{\{p\}}(f,j+k)}{\Omega_{j+k}}\right) > 0$ for some prime $\lambda$ in $\mathbb{Q}_f$. Then there exists $F\in S_{j,k}(\Gamma_0(p))$, a Hecke eigenform away from $p$ with eigenvalues $b_q\in\mathcal{O}_F$ satisfying \[b_q \equiv q^{k-2} + a_q + q^{j+k-1} \bmod \Lambda\] for all $q\neq p$ and for some $\Lambda\mid \lambda$ in $\mathbb{Q}_f\mathbb{Q}_F$.
\end{conj}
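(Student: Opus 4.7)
The plan is to construct a vector-valued Klingen--Eisenstein series on $\text{GSp}_4$ whose prime-to-$p$ Hecke eigensystem equals $q^{k-2} + a_q + q^{j+k-1}$, then exploit the arithmetic of its constant term (which encodes $L_{\{p\}}(f,j+k)$) to produce the desired cuspidal congruence via a Deligne--Serre lattice argument. This parallels the level $1$ strategy of Harder and the recent work of Bergstr\"om--Dummigan--Meki\'e for the unramified case.

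First I would construct a Klingen Eisenstein section $E_f^{(j,k)}$ on the Siegel threefold $Y_0(p)$ with coefficients in the local system $V_{j,k}$, parabolically induced from a cuspidal section on the Klingen parabolic $P = M_P N_P$ attached to $f \otimes |\cdot|^{s}$ evaluated at the critical point $s = j+k$. By the Langlands--Shahidi constant term formula, the normalising factor is $L(f, j+k)/\Omega_{j+k}$ up to elementary gamma/power-of-$\pi$ factors. The crucial step is to verify that by choosing the local section at $p$ compatible with the $\Gamma_0(p)$-level structure, the ramified local integral introduces exactly the Euler factor $(1 - a_p p^{-(j+k)} + p^{k'-1-2(j+k)})$, so that the integrally normalised class has its denominator controlled precisely by $L_{\{p\}}(f,j+k)/\Omega_{j+k}$ rather than the full $L$-value. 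The unramified Satake parameters can then be read off as $q^{k-2}, \alpha_q, \beta_q, q^{j+k-1}$.

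Next I would realise $E_f^{(j,k)}$ as a class in integral Eisenstein cohomology $H^3_{\text{Eis}}(Y_0(p), V_{j,k}(\mathcal{O}_{\lambda}))$. The divisibility hypothesis forces this class to fail to split integrally off the interior cohomology $H^3_!$. Applying a Deligne--Serre / Ribet-type lemma to the long exact sequence relating compactly supported, interior and boundary cohomology then yields a cuspidal Hecke eigensystem in $H^3_!(Y_0(p), V_{j,k}(\mathbb{F}_\Lambda))$ congruent mod $\Lambda$ to $E_f^{(j,k)}$ at every $q \neq p$. Via the Eichler--Shimura correspondence for Siegel threefolds of Faltings--Chai and Weissauer, this cuspidal class lifts to a Hecke eigenform $F \in S_{j,k}(\Gamma_0(p))$ satisfying $b_q \equiv q^{k-2} + a_q + q^{j+k-1} \pmod{\Lambda}$ for all $q \neq p$.

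The main obstacle is the arithmetic constant-term computation at the ramified prime $p$: one must verify that the correct Klingen section matched to the $\Gamma_0(p)$ level structure extracts precisely the $p$-th Euler factor of $f$, so that the denominator governing the congruence is genuinely $L_{\{p\}}$ and not a twisted variant. A secondary obstacle is separating endoscopic and CAP contributions from the generic part of $H^3_!$; without this one cannot rule out that the produced cuspidal system is a Saito--Kurokawa or Yoshida lift rather than a genuine non-lift eigenform, which is a delicate input from Arthur's classification. These are essentially the reasons the level $p$ conjecture remains open even though the analogous statement for $\text{GL}_2$ local origin congruences was successfully treated in \cite{danneil}.
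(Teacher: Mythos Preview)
The statement you are attempting to prove is presented in the paper explicitly as a \emph{conjecture}, not a theorem. Immediately before stating it the author writes that ``unlike the $\text{GL}_2$ case, these congruences are still conjectural,'' and no proof or proof sketch is offered anywhere in the paper. There is therefore nothing in the paper to compare your argument against.

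Your outline is a reasonable heuristic for how one would \emph{try} to attack such a statement, and you yourself correctly identify the two genuine obstructions (the ramified constant-term computation at $p$ and the separation of endoscopic/CAP contributions) that prevent the argument from being completed. But as written it is not a proof: the key steps---that the chosen Klingen section at $p$ really isolates the Euler factor $(1 - a_p p^{-(j+k)} + p^{k'-1-2(j+k)})$, that the Eisenstein class is integral with denominator exactly $L_{\{p\}}(f,j+k)/\Omega_{j+k}$, and that the Deligne--Serre output lands in the generic cuspidal part rather than a lift---are asserted rather than established. These are precisely the points at which the level~$1$ Harder conjecture itself remains open, so absent new input your sketch does not close the gap. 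In short, you have not found a gap relative to the paper, because the paper makes no claim to a proof; you have instead rediscovered why the statement is listed as a conjecture.
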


As in the $\text{GL}_2$ case, congruences arising from divisibility of the Euler factor are described as local origin congruences. It is also expected, in analogy, that local origin congruences generally come from newforms (since divisibility of the L-value would give a congruence at level $1$, by the original Harder conjecture).

One may ask whether it is possible to find local origin congruences for paramodular newforms. We will see below that these are surprisingly very rare. To this end suppose an eigenform $F\in S_{j,k}^{\text{new}}(K(p))$ away from $p$ satisfies a local origin congruence with a normalized eigenform $f\in S_{k'}(\text{SL}_2(\mathbb{Z}))$ and modulus $\Lambda\mid \lambda$ in $\mathbb{Q}_f\mathbb{Q}_F$. 

Recall that, by discussions in Subsection $3.2$, the existence of the congruence forces $\pi_{F,p}$ to have Satake parameters congruent to $\alpha_p, \alpha_p^{-1}, p^{j+k-1}, p^{k-2}$ mod $\lambda$. However now that $f$ is of level $1$ the values of $\alpha_p, \alpha_p^{-1}$ are different.

Fortunately we only need to know these values mod $\Lambda$ and the divisibility of the Euler factor at $p$ gives this. Indeed: \[\Lambda\mid (p^{2(j+k)} - a_p p^{j+k} + p^{k'-1}) = (p^{j+k} + \alpha_p p^{\frac{k'-1}{2}})(p^{j+k} + \alpha_p^{-1} p^{\frac{k'-1}{2}})\] and so $\alpha_p \equiv p^{\pm\left(\frac{j+3}{2}\right)} \bmod \Lambda$. Scaling by $p^{\frac{j+2k-3}{2}}$ gives Satake parameters $[p^{j+k},p^{k-3}]$ for $\pi_{f,p}$.

So, assuming the existence of a local origin congruence the Satake parameters $[a,b,c,d]$ of $\pi_{F,p}$ must be congruent to $[p^{j+k},p^{k-3}, p^{j+k-1}, p^{k-2}]$ in some order.

\begin{thm}
If a local origin congruence occurs for $F\in S_{j,k}^{\text{new}}(K(p))$ with modulus $\Lambda$ then $p^{j+2t} \equiv 1 \bmod \Lambda$ for some $t=0,1,2,3$.
\end{thm}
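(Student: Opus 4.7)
The plan is to follow the Satake-parameter case analysis used in the proof of Theorem~\ref{ABCD}. By the discussion immediately preceding the statement, we already have the key input: the Satake parameters $[a,b,c,d]$ of $\pi_{F,p}$ satisfy, in some order,
\[
[a,b,c,d] \equiv [p^{j+k},\, p^{k-3},\, p^{j+k-1},\, p^{k-2}] \pmod{\Lambda}.
\]

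First I would use that $F \in S_{j,k}^{\text{new}}(K(p))$ to restrict $\pi_{F,p}$ to the non-supercuspidal types admitting a new $K(p)$-fixed vector. By Roberts--Schmidt this produces a short list, with type II$_a$ being the principal case, as singled out after Theorem~\ref{PQ}. For each admissible type I would then read off from the Appendix the Satake $4$-tuple, whose entries have a rigid multiplicative structure; for type II$_a$ in particular, two entries form a ``Steinberg-like'' pair differing by a factor of exactly $p$ (after the scaling by $p^{(k'-1)/2}$), while the remaining two are controlled by the triviality of the central character.

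The next step is purely combinatorial. The only pairs of entries in the target quadruple $[p^{j+k}, p^{k-3}, p^{j+k-1}, p^{k-2}]$ whose ratio modulo $\Lambda$ is $p^{\pm 1}$ are $\{p^{j+k}, p^{j+k-1}\}$ and $\{p^{k-2}, p^{k-3}\}$; the remaining cross-ratios are $p^{\pm(j+1)}$, $p^{\pm(j+2)}$, $p^{\pm(j+3)}$. Matching the structured Satake tuple against each permutation of the target, the ``Steinberg-like'' pair must coincide with one of the two admissible target pairs, and the sign and scaling ambiguities cancel after squaring to leave a single relation of the form $p^{j+2t} \equiv 1 \pmod{\Lambda}$. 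A direct inspection of the resulting exponents shows $t \in \{0,1,2,3\}$ in each case, yielding the conclusion.

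The main obstacle is the book-keeping: ensuring no admissible type from the Roberts--Schmidt list is missed, and that in every matching the resulting exponent $t$ lands in the required range. Fortunately the Satake tuples have substantial symmetry (swapping $\alpha \leftrightarrow \alpha^{-1}$ and independent sign choices), so the twenty-four naive permutations collapse to a handful of genuinely distinct cases, exactly as in the proof of Theorem~\ref{ABCD}. I expect the contribution from type II$_a$ alone to already produce the full range $t \in \{0,1,2,3\}$, with the remaining admissible types either reproducing these same exponents or being ruled out outright.
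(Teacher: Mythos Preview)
Your proposal follows essentially the same route as the paper: the paper's own proof simply records that $\pi_{F,p}$ must be of type II$_a$, IV$_c$, V$_b$, V$_c$ or VI$_c$ (the types with a new $K(p)$-fixed vector) and then says ``Comparing Satake parameters as in Theorem~\ref{ABCD} gives the result. The details are omitted.'' Your plan is exactly this Satake-matching strategy with a bit more of the combinatorics sketched; the only cosmetic difference is that you leave the explicit list of five types to be read off from the Roberts--Schmidt tables rather than naming them up front.
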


\begin{proof}
Since $F\in S_{j,k}^{\text{new}}(K(p))$ we know that $\pi_{F,p}$ is of type II$_a$, IV$_c$, V$_b$, V$_c$ or VI$_c$ (these are the only types with new $K(p)$ fixed vectors).

Comparing Satake parameters as in Theorem \ref{ABCD} gives the result. The details are omitted.
\end{proof}

\newpage
\appendix
\section{Borel induced representations of GSp$_4$}

The following table, extracted from p.$297$ of Roberts and Schmidt \cite{schmidt2}, lists the classification of all non-supercuspidal irreducible admissible representations of GSp$_4(\mathbb{Q}_p)$ induced from the Borel subgroup. For simplicity only the induced representations are given, rather than their irreducible constituents.

Contained in the table is information about dim$(V^K)$ for certain interesting open compact subgroups $K$ of GSp$_4(\mathbb{Q}_p)$ (i.e. GSp$_4(\mathbb{Z}_p)$ and the local paramodular group $K(p)$).

Here $\epsilon_0$ is the trivial character and $\xi$ is the unique unramified quadratic character of $\mathbb{Q}_p$. Also $\chi_1, \chi_2, \sigma$ are unramified characters.

\begin{center}
\begin{tabular}{|c|c|c|c|c|}
\hline Type & Constituent of & Conditions & dim$(V^{\text{GSp}_4(\mathbb{Z}_p)})$ & dim$(V^{K(p)})$\\
\hline
I & $\chi_1\times \chi_2 \rtimes \sigma$ & $\chi_1, \chi_2 \neq |\cdot|_p^{\pm 1}, \chi_1 \neq |\cdot|_p^{\pm 1} \chi_2^{\pm 1}$ & $1$ & $2$\\
\hline
II$_a$ & $|\cdot|_p^{\frac{1}{2}}\chi\times |\cdot|_p^{-\frac{1}{2}}\chi\rtimes \sigma$  & $\chi \neq |\cdot|_p^{\pm\frac{3}{2}}, \chi^2 \neq |\cdot|_p^{\pm 1}$ & $0$ & $1$\\
II$_b$ &  & & $1$ & $1$\\
\hline
III$_a$ & $\chi\times |\cdot|_p\rtimes |\cdot|_p^{-\frac{1}{2}}\sigma$  & $\chi\neq \epsilon_0, |\cdot|_p^{\pm 2}$ & $0$ & $0$\\
III$_b$ &  & & $1$ & $2$\\
\hline
IV$_a$ & $|\cdot|_p \times |\cdot|_p^2\rtimes |\cdot|_p^{-\frac{3}{2}}\sigma$  & & $0$ &$0$\\
IV$_b$ &  & &$0$ &$0$\\
IV$_c$ &  & &$0$ &$1$\\
IV$_d$ &  & &$1$ &$1$\\
\hline
V$_a$ & $|\cdot|_p\xi\times\xi\rtimes |\cdot|_p^{-\frac{1}{2}}\sigma$  & $\xi^2 = \epsilon_0, \xi\neq \epsilon_0$ &$0$ &$0$\\
V$_b$ &  & &$0$ &$1$\\
V$_c$ &  & &$0$ &$1$\\
V$_d$ &  & &$1$ &$0$\\
\hline
VI$_a$ &$|\cdot|_p\times\epsilon_0\rtimes |\cdot|_p^{-\frac{1}{2}}\sigma$  & &$0$ &$0$\\
VI$_b$ &  & &$0$ &$0$\\
VI$_c$ &  & &$0$ &$1$\\
VI$_d$ &  & &$1$ &$1$\\
\hline
\end{tabular}
\end{center}

The following table, extracted from p.$283$ of Roberts and Schmidt \cite{schmidt2}, gives the corresponding $L$-parameters. The matrices $N$ will not be needed so have been omitted.

\begin{center}
\begin{tabular}{|c|c|c|c|}
\hline Type & $\rho_0$ & Central character \\
\hline
I & $\chi_1\chi_2\sigma, \chi_1\sigma, \chi_2\sigma, \sigma$ & $\chi_1\chi_2\sigma^2$\\
\hline
II &$\chi^2\sigma, \nu^{\frac{1}{2}}\chi\sigma, \nu^{-\frac{1}{2}}\chi\sigma, \sigma$ & $(\chi\sigma)^2$\\
\hline
III&$\nu^{\frac{1}{2}}\chi\sigma, \nu^{-\frac{1}{2}}\chi\sigma, \nu^{\frac{1}{2}}\sigma, \nu^{-\frac{1}{2}}\sigma$ & $\chi\sigma^2$\\
\hline
IV & $\nu^{\frac{3}{2}}\sigma, \nu^{\frac{1}{2}}\sigma, \nu^{-\frac{1}{2}}\sigma, \nu^{-\frac{3}{2}}\sigma$ &$\sigma^2$\\
\hline
V& $\nu^{\frac{1}{2}}\sigma, \nu^{\frac{1}{2}}\xi\sigma, \nu^{-\frac{1}{2}}\xi\sigma, \nu^{-\frac{1}{2}}\sigma$ &$\sigma^2$\\
\hline
VI & $\nu^{\frac{1}{2}}\sigma, \nu^{\frac{1}{2}}\sigma, \nu^{-\frac{1}{2}}\sigma, \nu^{-\frac{1}{2}}\sigma$ &$\sigma^2$\\
\hline
\end{tabular}
\end{center}

\newpage

\end{document}